\newtheorem{Proposition}{Proposition}[section]
\newtheorem{Lemma}[Proposition]{Lemma}
\newtheorem{Theorem}[Proposition]{Theorem}
\DeclareMathOperator{\vol}{Vol}
\DeclareMathOperator{\Val}{Val}
\DeclareMathOperator{\SL}{SL}
\DeclareMathOperator{\GL}{GL}
\DeclareMathOperator{\D}{D}
\DeclareMathOperator{\sgn}{sgn}
\newcommand{\R}{\mathbb{R}}
\newcommand{\C}{\mathbb{C}}
\newcommand{\K}{\mathcal{K}}
\newcommand{\func}[5]{\ensuremath{\begin{array}{cccl}
#1:&#2&\longrightarrow&#3\\&#4&\mapsto&#5\end{array}}}
\title[Minkowski valuations in a 2-dimensional complex vector space]{Minkowski valuations in a 2-dimensional complex vector space}
\author{Judit Abardia} 
\address{Institut f\"ur Mathematik, Goethe-Universit\"at Frankfurt, 
Robert-Mayer-Str. 10, 60054 Frankfurt, Germany}
\email{abardia@math.uni-frankfurt.de}
\begin{document}

\begin{abstract}The classification of continuous, translation invariant Minkowski valuations which are contravariant (or covariant) with respect to the complex special linear group is established in a 2-dimensional complex vector space. Every such valuation is given by the sum of a valuation of degree of homogeneity 1 and 3. In dimensions $m\geq 3$ such a classification was previously established and only valuations of a degree of homogeneity $2m-1$ appear.
\end{abstract}
\thanks{Supported by DFG grant BE 2484/3-1}
\date{\today}
\subjclass[2000]{52B45, 
52A39
}

\maketitle 

\section{Introduction}
Let $V$ denote a real vector space of dimension $n$ and $\mathcal{K}(V)$ the space of compact convex bodies in $V$, endowed with the Hausdorff topology. An operator $Z:\mathcal{K}(V)\to(A,+)$ with $(A,+)$ an abelian semi-group is called a {\em valuation} if it satisfies the following additivity property $$Z(K\cup L)+ Z(K\cap L)=Z(K)+Z(L),$$ for all $K,L\in\mathcal{K}(V)$ such that $K\cup L\in\mathcal{K}(V)$. If $(A,+)$ is the set of convex bodies endowed with the Minkowski addition, then $Z$ is called a {\em Minkowski valuation}. This class of valuations has been widely studied, see for instance \cite{haberl10,kiderlen,ludwig02, ludwig_2005,ludwig06_survey, ludwig10,parapatits.schuster12,schneider_schuster06, schuster.conv,schuster10,schuster.wannerer12,wannerer11}.

A Minkowski valuation $Z:\mathcal{K}(V)\to\mathcal{K}(V)$ is called {\em $\mathrm{SL}(V,\mathbb{R})$-covariant} if $$Z(gK)=gZ(K),\quad\forall g\in\mathrm{SL}(V,\mathbb{R}),$$
where $\SL(V,\R)$ denotes the special linear group. 
A Minkowski valuation $Z:\mathcal{K}(V)\to\mathcal{K}(V^*)$ is {\em $\mathrm{SL}(V,\mathbb{R})$-contravariant} if $$Z(gK)=g^{-*}Z(K),\quad\forall g\in\mathrm{SL}(V,\mathbb{R}),$$
where $V^*$ denotes the dual space of $V$ and $g^{-*}$ denotes the inverse of the adjoint map of $g$. 

Two well-known examples of Minkowski valuations are the projection body and difference body operators. 
The \emph{projection body of $K\in\K(V)$} is the convex body $\Pi K\in\K(V^*)$ with support function 
$$h(\Pi K,v)=\frac{n}{2}V(K,\dots,K,[-v,v]),\quad\forall v\in V,$$
where $V(K,\dots,K,[-v,v])$ denotes the mixed volume of $n-1$ copies of $K$ and one copy of the segment joining $-v$ and $v$.
The operator $\Pi$ constitutes an example of a continuous, translation invariant Minkowski valuation which is $\mathrm{SL}(V,\mathbb{R})$-contravariant (see \cite{petty67}). Ludwig proved in \cite{ludwig02,ludwig_2005} that in a real vector space of dimension $n\geq 2$ the projection body operator is the only (up to a positive constant) continuous, translation invariant and $\mathrm{SL}(V,\mathbb{R})$-contravariant Minkowski valuation. 

For the covariant case, it follows from the work of Ludwig \cite{ludwig_2005} that the difference body is the unique (up to a positive constant) continuous Minkowski valuation which is translation invariant and $\mathrm{SL}(V,\mathbb{R})$-covariant. The {\em difference body of a convex body $K\in\mathcal{K}(V)$} is defined by $$\mathrm{D}\!K=K+(-K),$$ where $-K$ denotes the reflection of $K$ about the origin. 

In \cite{abardia12, abardia.bernig}, the complex analog of the previous results was studied. More precisely, in a complex vector space of complex dimension $m\geq 3$ a classification result for those Minkowski valuations which are continuous, translation invariant and $\mathrm{SL}(W,\mathbb{C})$-contravariant or $\mathrm{SL}(W,\mathbb{C})$-covariant was given. In this framework, other valuations than the ones appearing in the real case have to be considered.  
Related results concerning convex bodies or valuations in a complex vector space as ambient space can be found in \cite{alesker03_un,bernig_fu_hig,fu06,koldobsky_koenig_zymonopoulou08,koldobsky.paouris.zymonopoulu,rubin10}.

The classification result for the $\mathrm{SL}(W,\mathbb{C})$-contravariant valuations states the following. 

\begin{Theorem}[\cite{abardia.bernig}]\label{contram}Let $W$ be a complex vector space of complex dimension $m \geq 3$. A map $Z:\mathcal{K}(W) \to \mathcal{K}(W^*)$ is a continuous, translation invariant and $\mathrm{SL}(W,\mathbb{C})$-contravariant Minkowski valuation if and only if there exists a convex body $N\subset\mathbb{C}$ such that $Z=\Pi_N$, where
$\Pi_N K \in \mathcal{K}(W^*)$ is the convex body with support function 
\begin{equation}\label{eqcontram}h(\Pi_{N}K,u)=V(K,\dots,K,N\cdot u),\quad\forall u \in W,\end{equation}
with $N\cdot u=\{cu\,:\,c\in N\subset \C\}$. Moreover, $N$ is unique up to translations. 
\end{Theorem}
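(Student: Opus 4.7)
The plan is to prove both directions using McMullen's decomposition, Alesker's irreducibility theorem, and the representation theory of $\SL(W,\C)$.

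For the \emph{if} direction, one checks that $\Pi_N$ satisfies all the required axioms. Since the mixed volume is continuous, translation invariant, Minkowski-linear and monotone in each of its arguments, the map $K\mapsto V(K,\dots,K,N\cdot u)$ is a continuous, translation-invariant valuation of degree $2m-1$ in $K$. Sublinearity in $u$ (needed so that the expression is a support function) can be checked after translating $N$ to contain the origin and then using that $u\mapsto N\cdot u$ is positively homogeneous together with the fact that mixed volumes are Minkowski-sublinear in their last argument. Contravariance under $g\in\SL(W,\C)\subset\SL(W,\R)$ uses only that $|{\det}_{\R}g|=1$, so real mixed volumes are $\SL(W,\R)$-invariant, together with the identity $g(N\cdot u)=N\cdot(gu)$ coming from complex linearity of $g$; uniqueness of $N$ up to translations reflects the translation invariance of the mixed volume in its last entry.

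For the \emph{only if} direction, I would first apply McMullen's decomposition to write $h(Z(K),u)=\sum_{k=0}^{2m}\phi_k(K,u)$, where for each fixed $u$ the map $K\mapsto\phi_k(K,u)$ is a continuous, translation-invariant scalar valuation of degree $k$ in $K$. Each component $\phi_k$ inherits $\SL(W,\C)$-contravariance, and the goal becomes to show that $\phi_k\equiv 0$ for $k\neq 2m-1$ and that $\phi_{2m-1}$ has the stated form. The vanishing is the main representation-theoretic step: by Alesker's irreducibility theorem the space of continuous translation-invariant valuations of degree $k$ is an irreducible $\GL(W,\R)$-module, and one must show that no nonzero $\SL(W,\C)$-equivariant intertwiner into the cone of sublinear, positively homogeneous functions on $W^*$ exists when $k\neq 2m-1$. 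This is precisely where the hypothesis $m\geq 3$ enters; in complex dimension two further irreducible components admit nonzero intertwiners, which is the phenomenon the present paper addresses.

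Once $Z$ is known to be homogeneous of degree $2m-1$, a characterization going back to Schneider realizes $h(Z(K),u)=V(K,\dots,K,C(u))$ for a convex body $C(u)\subset W$ depending only on $u$, and $\SL(W,\C)$-contravariance translates to $C(gu)=gC(u)$. The $\SL(W,\C)$-stabilizer of $u\in W\setminus\{0\}$ fixes the complex line $\C u$ pointwise and acts on $W/\C u$ as $\SL(m-1,\C)$, which is transitive on nonzero vectors; hence any stabilizer-invariant convex body projects trivially to $W/\C u$, forcing $C(u)\subset\C u$. Writing $C(u)=N(u)\cdot u$ for a planar set $N(u)\subset\C$ and using transitivity of $\SL(W,\C)$ on $W\setminus\{0\}$ together with the complex linearity of its action on $\C u$, one concludes that $N(u)$ is independent of $u$. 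The principal obstacle in the whole plan is the representation-theoretic vanishing in degrees $k\neq 2m-1$; the stabilizer and transitivity arguments of the final step are comparatively routine once the correct homogeneity degree has been singled out.
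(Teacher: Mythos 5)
This statement is imported from \cite{abardia.bernig}; the present paper cites it without proof, so there is no in-paper argument to compare against. Nevertheless your outline can be assessed on its own merits.

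The \emph{if} direction is essentially sound once stated precisely: the subadditivity of $u\mapsto V(K,\dots,K,N\cdot u)$ follows from the containment $N\cdot(u+v)\subset N\cdot u+N\cdot v$ (which holds for arbitrary $N$, not just those containing the origin), together with monotonicity and Minkowski-linearity of the mixed volume in its last entry. Your phrase ``mixed volumes are Minkowski-sublinear in their last argument'' is slightly off (they are Minkowski-\emph{linear}; the inequality comes from the set containment plus monotonicity), but the idea works. The verification of $\SL(W,\C)$-contravariance and the uniqueness of $N$ up to translation are as you describe.

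The \emph{only if} direction has two genuine gaps. First and most importantly, the vanishing of the McMullen components $\phi_k$ for $k\neq 2m-1$ is precisely the hard content of the theorem, and you state it as a goal (``one must show that no nonzero $\SL(W,\C)$-equivariant intertwiner\ldots exists'') rather than proving it. Invoking Alesker's irreducibility theorem does not resolve this on its own: $\Val_k$ is irreducible as a $\GL(2m,\R)$-module, but you are restricting to the much smaller group $\SL(W,\C)$, under which $\Val_k$ is highly reducible, so the existence of $\SL(W,\C)$-equivariant maps is not obstructed by irreducibility. One would need a concrete analysis of the possible equivariant maps landing in the cone of sublinear functions, and this is where the constraint $m\geq 3$ must actually be used---exactly the step your sketch leaves open. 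Second, once one knows $Z$ is homogeneous of degree $2m-1$, McMullen's Theorem~\ref{mcmullenN1} gives $h(Z(K),u)=V(K,\dots,K,\varphi_u)$ for a $1$-homogeneous, continuous function $\varphi_u$, unique up to a linear function, but \emph{not} a priori the support function of a convex body. Passing from $\varphi_u$ to a body $C(u)$ requires an additional argument (e.g.\ using the positivity/sublinearity of $h(Z(K),\cdot)$ and the full strength of $\SL(W,\C)$-equivariance); asserting ``a characterization going back to Schneider'' at this point glosses over a nontrivial step. The final stabilizer and transitivity arguments, by contrast, are correct as written.
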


The result in the $\mathrm{SL}(W,\mathbb{C})$-covariant case reads as follows.
\begin{Theorem}[\cite{abardia12}]\label{covm}Let $W$ be a complex vector space of complex dimension $m \geq 3$. A map $Z:\mathcal{K}(W) \to \mathcal{K}(W)$ is a continuous, translation invariant and $\mathrm{SL}(W,\mathbb{C})$-covariant Minkowski valuation if and only if there exists a convex body $M\subset\mathbb{C}$ such that $Z=\mathrm{D}_M$, where
$\mathrm{D}_M K \in \mathcal{K}(W)$ is the convex body with support function 
\begin{equation}\label{eqcovm}h(\mathrm{D}_{M} K,\xi)=\int_{S^1}h(\alpha K,\xi)dS(M,\alpha),\quad\forall \xi \in W^*.\end{equation}
Here $dS(M,\cdot)$ denotes the area measure of $M$, and $\alpha K=\{\alpha  k\,:\, k\in K\subset W\}$ with $\alpha\in S^1\subset\mathbb{C}$. Moreover, $M$ is unique up to translations. 
\end{Theorem}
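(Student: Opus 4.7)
The plan is to combine McMullen's homogeneous decomposition with the Alesker--Bernig--Fu structure of unitarily invariant valuations and the sublinearity built into the Minkowski-valued framework.

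First, McMullen's decomposition yields $Z = \sum_{k=0}^{2m} Z_k$, with each $Z_k$ inheriting the properties of $Z$ and being pure of homogeneity degree $k$. The extremal degrees are dispatched quickly: $Z_0$ must be a constant $\SL(W,\C)$-invariant convex body, hence a point, and translation invariance sends this point to $\{0\}$; similarly $Z_{2m}(K)=\vol(K)\cdot L_0$ for a fixed $L_0\in\K(W)$, and covariance under the volume-preserving group $\SL(W,\C)$ forces $L_0=\{0\}$.

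The substantive step is to show $Z_k=0$ for every $2 \le k \le 2m-1$. For fixed $\xi\in W^*$, the map $K\mapsto h(Z_k(K),\xi)$ is a continuous, translation invariant, real-valued valuation of degree $k$; after restriction to the maximal compact subgroup $\SU(m)\subset\SL(W,\C)$, its type is constrained by the Alesker--Bernig--Fu decomposition of $\mathrm{Val}_k^{\SU(m)}(W)$. The $\SL(W,\C)$-covariance promotes $Z_k$ to an intertwiner between $\SL(W,\C)$-representations with target $W$, and a Schur-type count identifies the admissible irreducible pieces. The Minkowski-valued hypothesis, via sublinearity in $\xi$, further restricts these pieces to those realisable as nonnegative area-measure integrals. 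This representation-theoretic analysis, carried out for $m\ge 3$, is expected to leave only the slot $k=1$; this is the main obstacle of the proof, and the hypothesis $m\ge 3$ enters precisely here, since for $m=2$ an extra surviving piece in degree $3$ produces the additional family that is the subject of the present paper.

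Finally, for $Z=Z_1$ the support function $h(Z(K),\xi)$ is linear in $K$, so by the classical representation of continuous translation invariant degree $1$ real-valued valuations it takes the form
\[
h(Z(K),\xi)=\int_{S^{2m-1}}h(K,u)\,d\mu_\xi(u)
\]
for a signed measure $\mu_\xi$ depending continuously on $\xi$. $\SL(W,\C)$-covariance forces the support of $\mu_\xi$ to lie on the $S^1$-orbit of a distinguished representative of $\xi$, and identifying the resulting measure on $S^1$ with the surface area measure $dS(M,\cdot)$ of a planar convex body $M\subset\C$ yields $Z=\mathrm{D}_M$. Uniqueness of $M$ modulo translations is then Minkowski's uniqueness theorem for area measures. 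The ``if'' direction is a direct verification that the integral formula produces a continuous, translation invariant, $\SL(W,\C)$-covariant Minkowski valuation.
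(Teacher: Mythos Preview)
This theorem is not proved in the present paper; it is quoted from \cite{abardia12} as background for the $m=2$ classification. There is therefore no proof here against which to compare your proposal.

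On the proposal itself, there are two genuine gaps.

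First, McMullen's decomposition does not produce Minkowski valuations $Z_k$. The homogeneous components $f_k(K,\cdot)$ of $h(ZK,\cdot)$ are in general \emph{not} support functions---the paper says so explicitly, citing \cite{parapatits.wannerer}. Only the extremal nonzero degrees are guaranteed to be sublinear in the direction variable (Lemma~\ref{support}, from \cite{schneider_schuster06}). Consequently you cannot invoke ``the Minkowski-valued hypothesis, via sublinearity in $\xi$'' for an individual $f_k$ in the range $2\le k\le 2m-1$; sublinearity is available only for the full sum, and extracting constraints on a single component from it requires scaling/limiting arguments of the kind you see carried out carefully in Section~3 of this paper.

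Second, and more seriously, your Step~3 is the whole content of the theorem and you do not actually argue it. Saying that a ``Schur-type count'' on $\Val_k^{\SU(m)}$ ``is expected to leave only the slot $k=1$'' is a hope, not a proof. What is needed is not the space of $\SU(m)$-\emph{invariant} valuations but the space of $\SL(W,\C)$-\emph{covariants}, i.e.\ the equivariant maps from $\Val_k$ into $1$-homogeneous functions on $W^*$, together with an argument that for $2\le k\le 2m-1$ none of the surviving candidates can occur inside a global Minkowski valuation---and this last step has to be done without componentwise sublinearity. The proof in \cite{abardia12} does not run through harmonic analysis of this sort; judging from how the present paper uses and extends it, the method is much closer to the hands-on techniques of Section~3 here: restriction to lower-dimensional flats, McMullen's description of $\Val_{n-1}$ (Theorem~\ref{mcmullenN1}), the Klain--Schneider characterisation of simple valuations (Theorem~\ref{simple}, Lemmas~\ref{lemma24a}--\ref{lemma24b}), and carefully chosen one-parameter subgroups of $\GL(W,\C)$ to force vanishing. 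Your outline would need substantial new input to replace that analysis.
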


The necessity of the assumption $m\geq 3$ in Theorem \ref{contram} was already shown in \cite{abardia.bernig}, where a family of $\SL(W,\C)$-contravariant Minkowski valuations of homogeneity degree 1 was explicitly constructed when $m=2$, leaving the complete classification in the 2-dimensional case open until now. 
The fact that more operators appear in this situation is due to the existence of the following canonical identification between a 2-dimensional complex vector space $W$ and its dual space $W^*$:

Fix a basis of the 2-dimensional complex vector space $W$ and consider the determinant map 
\begin{equation}\label{detId}\func{\det}{W\times W}{\C}{(u,v)}{\det(u,v).}
\end{equation}
This map induces an identification $\Phi$ between $W$ and its dual space $W^*$, which satisfies $\Phi(gu)=(\det g) g^{-*}\Phi(u)$, for every $g\in\GL(W,\C)$, $u\in W$.

Thus, every $\SL(W,\C)$-contravariant (resp.\! covariant) Minkowski valuation $Z$ of degree $k$ induces an $\SL(W,\C)$-covariant (resp.\! contravariant) Minkowski valuation $\Phi^{-1}\circ Z$ (resp.\! $\Phi\circ Z$) also of degree $k$. 

In this note we prove the theorem below which gives a complete classification of the $\SL(W,\C)$-contravariant and $\SL(W,\C)$-covariant continuous, translation invariant Minkowski valuations in a 2-dimensional complex vector space. 

\begin{Theorem}\label{contravariant}Let $W$ be a 2-dimensional complex vector space. A map $Z:\K(W)\to\K(W^*)$ is a continuous, translation invariant and $\SL(W,\C)$-contravariant Minkowski valuation if and only if there are convex bodies $M,N\subset\C$ for which $ZK=\tilde\D_{M}K+\Pi_{N}K$, where $\tilde \D_{M}:=\Phi\circ\D_{M}:\K(W)\to\K(W^*)$ is defined by
\begin{equation}\label{contra1}h(\tilde\D_{M}K,w)=\int_{S^1}h(\det(K,w),\alpha)dS(M,\alpha),\quad K\in\K(W),\,w\in W,\end{equation}
with $\det(K,w):=\{\det(k,w)\,|\,k\in K\}$,
and $\Pi_{N}:\K(W)\to\K(W^*)$ is defined by 
\begin{equation}\label{contra3}h(\Pi_{N}K,w)=V(K,K,K,N\cdot w).
\end{equation} 
Moreover, $M$ and $N$ are unique up to translations. 
\end{Theorem}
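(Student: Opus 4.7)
The plan is to apply McMullen's homogeneous decomposition, writing the given Minkowski valuation as $Z = Z_0 + Z_1 + Z_2 + Z_3 + Z_4$ (since $\dim_\R W = 4$), with each component homogeneous of degree $k$ and again continuous, translation invariant and $\SL(W,\C)$-contravariant. Sufficiency of the classification is handled separately by checking directly from the defining formulas \eqref{contra1} and \eqref{contra3} that $\tilde\D_M$ and $\Pi_N$ are Minkowski valuations of the stated equivariance; uniqueness of $M$ and $N$ up to translations then follows because they live in different homogeneity degrees, so each can be recovered separately by evaluating $Z$ on a well-chosen family of bodies.

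For the extremal degrees, the top-degree part satisfies $h(Z_4 K, w) = \vol(K)\, f(w)$ for some sublinear, $1$-homogeneous $f:W\to \R$, because a continuous, translation invariant valuation of top degree is a multiple of the volume. As every $g\in\SL(W,\C)$ preserves the real volume, contravariance degenerates to $f(w) = f(g^{-1}w)$; together with the transitivity of $\SL(W,\C)$ on $W\setminus\{0\}$ and the positive $1$-homogeneity of $f$, this forces $f\equiv 0$ and hence $Z_4 = 0$. The constant body $Z_0(\{0\})$ is annihilated by an even more direct $\SL(W,\C)$-invariance argument.

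For $Z_3$, I would adapt the proof of Theorem~\ref{contram} from \cite{abardia.bernig} to $m = 2$: the arguments specific to the homogeneity degree $2m-1 = 3$, namely polarization of the support function, reduction to an analysis of the Klain-type section on complex hyperplanes, and identification of the arising mixed volumes with those against a disk in $\C$, are expected to carry over without the use of the hypothesis $m\geq 3$, giving $Z_3 = \Pi_N$ for a unique $N\subset\C$. For $Z_1$, I would exploit the canonical identification $\Phi:W\to W^*$ to transfer to the covariant setting: $\Phi^{-1}\circ Z_1:\K(W)\to\K(W)$ is a continuous, translation invariant, $\SL(W,\C)$-covariant Minkowski valuation of degree $1$. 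Such valuations are linear in $K$, hence determined by their values on segments; the $\SL(W,\C)$-equivariance combined with the complex structure forces their action on a segment $[0,v]$ to depend only on the $S^1$-orbit through $v$ and to factor through the mixed-area construction against a fixed body $M\subset\C$, yielding $\Phi^{-1}\circ Z_1 = \D_M$ and thus $Z_1 = \tilde\D_M$.

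The hardest step, and the main obstacle, will be the middle degree $Z_2$. Polarization presents $h(Z_2 K, w)$ as a symmetric Minkowski-bilinear function $F(K_1,K_2;w)$, sublinear and $1$-homogeneous in $w$, satisfying $F(gK_1,gK_2;w) = F(K_1,K_2;g^{-1}w)$ for all $g\in\SL(W,\C)$. My plan is to combine the Alesker--Bernig classification of $\SL(W,\C)$-equivariant translation invariant real-valued valuations in degree $2$ with a direct analysis on complex lines and planar simplices, enumerating the finitely many equivariant candidates for $F$; the sublinearity of $F$ in the dual variable $w$ should then rule out every nonzero candidate. This step is delicate because degree $2$ is the middle real dimension, where the underlying representation theory is richest, and it is precisely this degree which must be shown to contribute nothing to the final classification.
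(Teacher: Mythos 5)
Your outline agrees with the paper in its scaffolding: McMullen's decomposition into homogeneous components, elimination of degrees $0$ and $4$ via volume/Euler-characteristic arguments incompatible with $\SL(W,\C)$-contravariance, identification of the degree-$3$ component with $\Pi_N$ by checking that the relevant parts of the proof of Theorem~\ref{contram} never use $m\geq 3$, and of the degree-$1$ component with $\tilde\D_M$ via the identification $\Phi$ and Theorem~\ref{covm}. All of this is essentially the paper's route.

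The real gap is in the degree-$2$ step, and it is not a small one. You assert that $h(Z_2K,w)$ is ``sublinear and $1$-homogeneous in $w$,'' i.e.\ that $f_2(K,\cdot)$ is a support function, and then plan to use that sublinearity (and that of its polarization) to kill all candidates. But the middle homogeneous component of a Minkowski valuation is \emph{not} in general a support function: only the extremal components are guaranteed to be (Lemma~\ref{support}), and the paper explicitly cites \cite{parapatits.wannerer} for the failure in intermediate degrees. So the very hypothesis your degree-$2$ argument starts from is unavailable. The paper instead works around this by restricting to low-dimensional bodies: when $\dim K\leq 2$ the degree-$3$ term vanishes, making $f_1+f_2$ the full support function so that $f_2(K,\cdot)$ \emph{is} sublinear there, and a scaling argument based on the half-integer exponent $(\det_\C g)^{3/2}$ (Lemma~\ref{det32}) forces $f_2(K,\cdot)=0$. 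For $\dim K=3$, it is the sublinearity of the whole sum $h(ZK,\cdot)=f_1+f_2+f_3$ --- divided by $\lambda^2$ and pushed to $\lambda\to\infty$ --- that yields the needed inequality, combined with oddness of $f_2$ (Lemma~\ref{lemma24a}), the McMullen/Klain--Schneider representation $f_2(K,u)=V(K,K,\varphi_u)$, and an explicit computation on simplices to show $\varphi_{\beta e_2}$ is linear; then Lemma~\ref{lemma24b} finishes. Your alternative of invoking ``the Alesker--Bernig classification of $\SL(W,\C)$-equivariant translation invariant degree-$2$ valuations'' is also unsubstantiated: no such off-the-shelf classification with a finite candidate list is in the literature for this group and degree, and even if one produced the relevant $\SL(2,\C)$-isotypic decomposition of $\Val_2(W)$, the sublinearity constraint you want to exploit is, as explained, not available for $f_2$ alone. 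To repair the proposal you would need to replace the global sublinearity assumption with the dimension-stratified argument: establish the scaling covariance $f_2(gK,u)=(\det_\C g)^{3/2}f_2(K,g^{-1}u)$, prove vanishing on $2$-dimensional bodies via that fractional exponent, then handle $3$-dimensional bodies using the sublinearity of the full $h(ZK,\cdot)$ rather than of $f_2$, before concluding with Lemma~\ref{lemma24b}.
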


Using the identification map $\Phi$ given by \eqref{detId}, the previous theorem also yields a classification of the Minkowski valuations $Z':\K(W)\to\K(W)$ which are continous, translation invariant and $\SL(W,\C)$-covariant. In this case, we have $Z'=\Phi^{-1}\circ Z$, that is, $Z'K=\D_{M}K+(\Phi^{-1}\circ\Pi_{N})K$ for some convex bodies $M,N\in\K(\C)$. 

\subsection*{Acknowledgments}
I would like to thank Andreas Bernig, Monika Ludwig, Franz Schuster and Thomas Wannerer for interesting discussions and useful remarks on this paper.

\section{Background material}
For more information on the results stated in this section, we refer to the books \cite{gardner_book06,klain_rota, schneider_book93}.

\subsection{Support function}
Let $K \in \mathcal{K}(V)$. The {\em support function of $K$} is given by
\begin{align*} 
 h(K,\cdot):V^* & \to \R, \\
\xi & \mapsto \sup_{x \in K}\langle \xi,x\rangle,
\end{align*}
where $\langle \xi,x\rangle$ denotes the pairing of $\xi \in V^*$ and $x\in V$.

The support function is $1$-homogeneous (i.e. $h(K,t\xi)=th(K,\xi)$ for all $t \geq 0$) and subadditive (i.e. $h(K,\xi+\eta) \leq h(K,\xi)+h(K,\eta)$ for all $\xi,\eta\in V^*$). Moreover, if a function on $V^*$ is $1$-homogeneous and subadditive, then it is the support function of a unique compact convex set $K \in \K(V)$ (cf. \cite[Theorem 1.7.1]{schneider_book93}). Note that if $h(K,\xi)=h(K,-\xi)=0$ for some $\xi\in V^*$, then $K\subset \ker\xi\subset V$.

The support function is also linear with respect to the Minkowski sum on $\K(V)$ and has the following property 
$$h(gK,\xi)=h(K,g^*\xi),\quad \forall \xi \in V^*,\, g \in \GL(V,\R).$$
In a complex vector space $W$ this equality holds in particular for $g\in\GL(W,\C)$. 

\subsection{Mixed volumes}
In an $n$-dimensional real vector space $V$, the {\em mixed volume} is the unique symmetric and Minkowski multilinear map $(K_{1},\dots,K_{n})\mapsto V(K_{1},\dots,K_{n})$ on $n$-tuples of convex bodies with $V(K,\ldots,K)=\vol(K)$.

It is nonnegative, continuous and translation invariant in each
component. Moreover,  
$$V(gK_1,\dots,gK_n)=|\det g|V(K_{1},\dots,K_{n}),\quad g\in\GL(V,\R).$$

We shall use the following extension of mixed volumes. Given $K_1,\dots,K_{n-1}\in\K(V)$, the functional $K \mapsto V(K_1,\ldots,K_{n-1},K)$ can be uniquely extended to a continuous linear functional on the space of continuous $1$-homogeneous functions $f:V^* \to \R$ such that for all $K \in \mathcal{K}(V)$ 
$$V(K_1,\ldots,K_{n-1},h_K)=V(K_1,\ldots,K_{n-1},K).$$

\subsection{McMullen's decomposition}
Let $\Val$ denote the Banach space of real-valued, translation invariant, continuous valuations on $V$.

A valuation $\phi \in \Val$ is called {\it homogeneous of degree $k$} if $\phi(tK)=t^k\phi(K)$ for all $t \geq 0$.  
The subspace of valuations of degree $k$ is denoted by $\Val_k$. 

\begin{Theorem}[McMullen \cite{mcmullen77}]
\begin{equation} \label{mcmullen_dec}
\Val=\bigoplus_{k=0,\ldots,n} \Val_k.
\end{equation} 
\end{Theorem}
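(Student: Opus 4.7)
The plan is to establish McMullen's decomposition by first proving a polynomiality property of $\phi$ on Minkowski combinations of polytopes, then reading off the homogeneous components from the polynomial coefficients, and finally extending everything to $\K(V)$ by continuity. The key intermediate claim to target is: for any polytopes $P_{1},\ldots,P_{r}\in\K(V)$, the map
\[
(\lambda_{1},\ldots,\lambda_{r}) \longmapsto \phi(\lambda_{1}P_{1}+\cdots+\lambda_{r}P_{r})
\]
is a polynomial on $\R_{\geq 0}^{r}$ of total degree at most $n=\dim V$. I would prove this by a joint induction on $n$ and $r$. The inductive step uses the valuation identity applied to a dissection of the Minkowski sum: each piece of the dissection either lives in a proper affine subspace of $V$ (where one applies the induction on dimension) or involves strictly fewer summands (where one applies the induction on $r$); translation invariance is what allows the pieces to be realigned so that the inclusion-exclusion bookkeeping actually closes up. The degree bound tracks through the induction, matching the ambient dimension.

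Granting the polynomiality claim, specializing to $r=1$ and a single polytope $P$ gives $\phi(tP)=\sum_{k=0}^{n}a_{k}(P)\,t^{k}$ for uniquely determined coefficients, and one defines $\phi_{k}(P):=a_{k}(P)$. The polynomial identity itself shows $\phi_{k}(sP)=s^{k}\phi_{k}(P)$, so $\phi_{k}$ is $k$-homogeneous on polytopes. Substituting $s_{i}P_{i}$ in place of $P_{i}$ in the multivariate polynomial and collecting by total degree separately transfers the valuation identity and translation invariance of $\phi$ to each $\phi_{k}$ on polytopes. Continuity of $\phi_{k}$ on polytopes follows from the fact that, for any fixed choice of pairwise distinct $t_{0},\ldots,t_{n}>0$, the vector $(\phi_{0}(P),\ldots,\phi_{n}(P))$ is obtained from $(\phi(t_{0}P),\ldots,\phi(t_{n}P))$ by inverting a fixed Vandermonde matrix; the continuous dependence of the Minkowski dilates $t_{i}P$ on $P$, combined with continuity of $\phi$, then gives continuity of each $\phi_{k}$.

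Because polytopes are Hausdorff-dense in $\K(V)$ and each $\phi_{k}$ is continuous and uniformly controlled on any bounded family (being a linear combination of values of $\phi$), one extends $\phi_{k}$ uniquely to a translation invariant continuous valuation on all of $\K(V)$; $k$-homogeneity passes to the extension by continuity, and the identity $\phi=\sum_{k=0}^{n}\phi_{k}$ holds on all convex bodies since it holds on the dense subset of polytopes. Uniqueness of the decomposition is immediate: if $\sum_{k}\psi_{k}=0$ with each $\psi_{k}\in\Val_{k}$, then for fixed $K$ the polynomial $\sum_{k}t^{k}\psi_{k}(K)$ vanishes for all $t\geq 0$, so each $\psi_{k}(K)=0$. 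The main obstacle is the polynomiality claim: everything after it is essentially polynomial interpolation plus a density-and-continuity argument, while the claim itself requires the delicate simultaneous induction on $(n,r)$ with a dissection argument that respects both the valuation property and translation invariance.
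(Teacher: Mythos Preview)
The paper does not supply its own proof of this statement: McMullen's decomposition is quoted in the background section with a reference to \cite{mcmullen77} and is used as a black box in the arguments that follow. There is therefore no proof in the paper to compare your proposal against.

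That said, your outline is the standard route to the result and is essentially McMullen's own argument. The polynomiality of $(\lambda_1,\ldots,\lambda_r)\mapsto\phi(\lambda_1 P_1+\cdots+\lambda_r P_r)$ for polytopes is indeed the heart of the matter, and the extraction of homogeneous components by Vandermonde interpolation followed by extension via density of polytopes is correct. One small caution: in your inductive step you speak of a ``dissection of the Minkowski sum'' into pieces that either live in a proper subspace or involve fewer summands. McMullen's actual mechanism is a bit different---he works through the canonical simplex decomposition and Euler-type relations on the face lattice rather than a direct geometric dissection of the sum---so if you were to fill in the details you would need to be careful that your dissection really exists and that the inclusion-exclusion terms match. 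The rest of your argument (homogeneity, valuation property, translation invariance, and continuity of the $\phi_k$ via a fixed linear map applied to $(\phi(t_0 K),\ldots,\phi(t_n K))$) is clean and correct.
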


Let $Z:\K(V)\to\K(V^*)$ be a continuous, translation invariant Minkowski valuation and $u\in V$ be fixed. Then, McMullen's decomposition implies that
$$h(ZK,u)=\sum_{i=0}^{n}f_{i}(K,u),$$
where $f_{i}(K,u)$ is continuous and satisfies
$$f_{i}(\lambda K,u)=\lambda^{i} f_{i}(K,u),\quad\forall\lambda\in\R_{>0},$$
$$f_{i}(K,\lambda u)=\lambda f_{i}(K,u),\quad\forall\lambda\in\R_{>0}.$$
In \cite{parapatits.wannerer}, it has recently been proved that the functions $f_{i}(K,\cdot)$ are, in general, not support functions. However, in \cite{schneider_schuster06} the following result was proved.

\begin{Lemma}[\cite{schneider_schuster06}]\label{support}Let $V$ be an $n$-dimensional vector space, and $Z:\K(V)\to\K(V^*)$ be a continuous, translation invariant Minkowski valuation. If a convex body $K\in\K(V)$ satisfies
$$h(Z(K),\cdot)=\sum_{i=k}^lf_{i}(K,\cdot),$$
for some $k,l\in\{0,\dots,n\}$, $k\leq l$, then $f_{k}(K,\cdot)$ and $f_{l}(K,\cdot)$ are support functions.  
\end{Lemma}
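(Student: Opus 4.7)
The plan is to prove both implications. For sufficiency I verify directly that $\tilde\D_M$ and $\Pi_N$ are continuous, translation invariant, $\SL(W,\C)$-contravariant Minkowski valuations. The valuation property of $\Pi_N$ comes from the multilinearity of mixed volumes, while that of $\tilde\D_M$ follows from the $\R$-linearity of $k\mapsto\det(k,w)$ combined with the integral representation against $dS(M,\cdot)$. Translation invariance of $\tilde\D_M$ uses the Minkowski relation $\int_{S^1}\alpha\,dS(M,\alpha)=0$, and contravariance under $g\in\SL(W,\C)$ follows from $\det(gk,w)=\det(k,g^{-1}w)$ (a consequence of $\det_{\C}g=1$). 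For $\Pi_N$, contravariance follows from $|\det_{\R}g|=|\det_{\C}g|^2=1$ together with $g^{-1}(N\cdot w)=N\cdot g^{-1}w$.

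For necessity I apply McMullen's decomposition to write $h(ZK,u)=\sum_{i=0}^{4}f_i(K,u)$ with $f_i(\cdot,u)\in\Val_i(W)$, and $f_i(K,\cdot)$ one-homogeneous in $u$ (by uniqueness of the decomposition applied to $h(ZK,\lambda u)=\lambda h(ZK,u)$). The same uniqueness, applied to $h(Z(gK),u)=h(ZK,g^{-1}u)$, yields $f_i(gK,u)=f_i(K,g^{-1}u)$ for every $g\in\SL(W,\C)$. I kill $f_0$ and $f_4$ using that $\SL(W,\C)$ acts transitively on $W\setminus\{0\}$: for $f_0$, independence of $K$ and one-homogeneity in $u$ force $f_0\equiv0$; for $f_4$, the identity $\Val_4(W)=\R\cdot\vol$ gives $f_4(K,u)=c(u)\vol(K)$, and the same transitivity argument forces $c\equiv0$.

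The decomposition then reduces to $h(ZK,\cdot)=f_1(K,\cdot)+f_2(K,\cdot)+f_3(K,\cdot)$, so Lemma \ref{support} yields that $f_1(K,\cdot)$ and $f_3(K,\cdot)$ are support functions for every $K$, defining continuous, translation invariant, $\SL(W,\C)$-contravariant Minkowski valuations $Z_1$ and $Z_3$ of degrees $1$ and $3$. To identify $Z_1$, I use that every degree-one translation invariant continuous valuation is Minkowski additive in $K$, which gives an integral representation of $h(Z_1K,u)$ against a signed measure $\mu_u$ on the unit sphere of $W$; the equivariance translates to $(g^*)_*\mu_u=\mu_{g^{-1}u}$, and after reducing to the stabilizer of a fixed $u_0\in W\setminus\{0\}$ in $\SL(W,\C)$, the measure $\mu_{u_0}$ is supported on the $\C$-orbit through $u_0$ and can be rewritten via $\det(\cdot,w)$ as an area measure on $S^1\subset\C$, giving $Z_1=\tilde\D_M$ with $M$ unique up to translations. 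For $Z_3$, since $h(Z_3K,u)$ belongs to $\Val_{n-1}(W)$ with $n=\dim_{\R}W=4$, Hadwiger's characterization of degree-$(n-1)$ valuations produces $h(Z_3K,u)=V(K,K,K,L_u)$ for a convex body $L_u\subset W$ depending continuously and one-homogeneously on $u$; the contravariance combined with the stabilizer argument forces $L_u=N\cdot u$ (up to translations in each slot, absorbed by the first-moment relation) for a unique $N\in\K(\C)$, yielding $Z_3=\Pi_N$. Finally $f_2(K,\cdot)=h(ZK,\cdot)-h(\tilde\D_M K,\cdot)-h(\Pi_N K,\cdot)$ inherits $\SL(W,\C)$-equivariance, and testing on bodies that isolate the degree-two component (for instance complex lines $\C\cdot v$, on which the degree-one and degree-three components determine $M$ and $N$ completely) forces $f_2\equiv0$.

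The main obstacle is the degree-one classification: $Z_1=\tilde\D_M$ is the new phenomenon in the 2-dimensional case absent for $m\geq3$, and requires careful analysis of $\SL(W,\C)$-equivariant signed measures on the real $3$-sphere in $W$ and their identification as pushforwards under $\det(\cdot,w)$. Killing $f_2$ is the second delicate point, since $f_2(K,\cdot)$ is a priori only continuous and one-homogeneous and not known to be a support function, so it must be handled by first pinning down $M$ and $N$ from the extremal components $Z_1$ and $Z_3$ and then showing the residual equivariant function vanishes.
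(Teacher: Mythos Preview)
Your proposal addresses the wrong statement. The statement you were asked to prove is Lemma~\ref{support}, which asserts that the extremal homogeneous components $f_k(K,\cdot)$ and $f_l(K,\cdot)$ in the McMullen decomposition of $h(ZK,\cdot)$ are themselves support functions. Instead, you have sketched a proof of the main classification result, Theorem~\ref{contravariant}, and you invoke Lemma~\ref{support} as a black box in your third paragraph. So your proposal contains no argument whatsoever for the statement in question.

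The paper does not give its own proof of Lemma~\ref{support}; it is quoted from \cite{schneider_schuster06}. The argument there is short and elementary: for the given $K$, replace $K$ by $\lambda K$ so that
\[
h(Z(\lambda K),\cdot)=\sum_{i=k}^{l}\lambda^{i}f_{i}(K,\cdot).
\]
Dividing by $\lambda^{k}$ and letting $\lambda\to 0^{+}$ exhibits $f_{k}(K,\cdot)$ as a locally uniform limit of the support functions $\lambda^{-k}h(Z(\lambda K),\cdot)$, hence a support function; dividing by $\lambda^{l}$ and letting $\lambda\to\infty$ does the same for $f_{l}(K,\cdot)$. None of this appears in your write-up.

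As a side remark, even read as a sketch for Theorem~\ref{contravariant}, your treatment of $f_{2}$ is a genuine gap. You propose to ``test on complex lines $\C\cdot v$'' to force $f_{2}\equiv 0$, but bodies in a complex line have real dimension at most $2$, and on such bodies both $f_{1}$ and $f_{2}$ can be nonzero, so nothing is isolated. In the paper the vanishing of $f_{2}$ is the technical heart of the argument: it takes two separate lemmas, first showing $f_{2}(K,\cdot)\equiv 0$ for $\dim K\leq 2$ via a scaling argument in $\GL(W,\C)$ (Lemma~\ref{LemmaDim2}), then for $\dim K\leq 3$ via Theorem~\ref{simple} and an explicit computation with simplices, before Lemma~\ref{lemma24b} finishes the job. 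Your one-line dismissal does not substitute for this.
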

Moreover, if $Z$ has an invariance property (e.g.\! it is $\SL(V,\R)$-contravariant), then each $f_{i}$ satisfies the same invariance property.

\subsection{Homogeneous real-valued valuations}
In this section, we recall the characterization results on continuous, translation invariant valuations with values in $\R$ we shall need to prove Theorem \ref{contravariant}. For more recent results on real-valued valuations  see, for instance, \cite{alesker_mcmullenconj01,alesker_fourier,alesker_bernig_schuster,klain00,ludwig.reitzner10}.

One of the first characterization results is due to Hadwiger. 
\begin{Theorem}[\cite{hadwiger}]\label{maxDegree}Let $V$ be an $n$-dimensional vector space and let $\phi:\K(V)\to\R$ be a continuous, translation invariant valuation which is homogeneous of degree $n$, i.e. $\phi\in\Val_{n}$. Then $\phi=c\vol_{n}$ with a constant $c\in\R$.
\end{Theorem}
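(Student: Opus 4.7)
The strategy is to first prove that $\phi$ is simple (vanishes on lower-dimensional bodies), then explicitly determine $\phi$ on parallelepipeds, and finally extend to all convex bodies via polarization and density.

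\textbf{Simplicity.} Suppose $K\in\K(V)$ satisfies $\dim K<n$. By translation invariance we may assume $K$ lies in a linear hyperplane $H\subset V$. The restriction $\phi_H:=\phi|_{\K(H)}$ is a continuous, translation-invariant valuation on $\K(H)$ that is still $n$-homogeneous. Applying McMullen's decomposition to the $(n-1)$-dimensional space $H$ yields $\phi_H=\sum_{k=0}^{n-1}\psi_k$ with $\psi_k\in\Val_k(H)$. For each fixed $K\in\K(H)$, the polynomial identity
\[
\sum_{k=0}^{n-1}\lambda^k\psi_k(K)=\phi_H(\lambda K)=\lambda^n\phi_H(K),\qquad\lambda>0,
\]
then forces, by coefficient comparison in $\lambda$, that $\psi_k(K)=0$ for every $k$, and in particular $\phi(K)=0$.

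\textbf{Parallelepipeds.} Set $P(v_1,\dots,v_n):=[0,v_1]+\dots+[0,v_n]$ and $F(v_1,\dots,v_n):=\phi(P(v_1,\dots,v_n))$. Slicing $P$ by a hyperplane parallel to $\mathrm{span}(v_2,\dots,v_n)$ at $tv_1$ gives the decomposition $P(v_1,v_2,\dots)=P(tv_1,v_2,\dots)\cup(tv_1+P((1-t)v_1,v_2,\dots))$ with lower-dimensional intersection. Simplicity and translation invariance then yield $F(v_1,v_2,\dots)=F(tv_1,v_2,\dots)+F((1-t)v_1,v_2,\dots)$ for $t\in[0,1]$; iteration and continuity give $F(tv_1,v_2,\dots)=tF(v_1,\dots)$ for $t\geq 0$. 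Since $P(-v_1,v_2,\dots)=-v_1+P(v_1,v_2,\dots)$, translation invariance upgrades this to absolute homogeneity $F(tv_i,\dots)=|t|F(\dots)$ for all $t\in\R$ in each edge. A dissection argument---triangulating $P(v_1+av_j,v_2,\dots,v_n)$ and $P(v_1,v_2,\dots,v_n)$ into simplices matching up to translations by multiples of $v_j$, with lower-dimensional overlaps again killed by simplicity---establishes shear invariance $F(v_1+av_j,v_2,\dots)=F(v_1,\dots)$. Since diagonal scalings together with shears generate $\GL(V,\R)$, we conclude $F(v_1,\dots,v_n)=c|\det(v_1,\dots,v_n)|$ with $c:=\phi([0,1]^n)$, so $\phi=c\vol_n$ holds on every parallelepiped.

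\textbf{Extension.} Set $\psi:=\phi-c\vol_n\in\Val_n$; by the two previous steps, $\psi$ is simple and vanishes on every parallelepiped. McMullen's polynomial theorem attaches to $\psi$ a symmetric, Minkowski-multilinear polarization $\psi^*$ on $\K(V)^n$ satisfying $\psi^*(K,\dots,K)=\psi(K)$. Since $\sum_i\lambda_i[0,v_i]=P(\lambda_1 v_1,\dots,\lambda_n v_n)$, the vanishing of $\psi$ on parallelepipeds and comparison of the polynomial coefficients in $(\lambda_1,\dots,\lambda_n)$ force $\psi^*$ to vanish on all $n$-tuples of segments. Multilinearity, continuity, and density of polytopes in $\K(V)$---combined with the standard triangulation of a polytope into simplices and the $n!$-fold dissection relating simplices back to parallelepipeds---propagate this vanishing to the whole of $\K(V)^n$. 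Taking the diagonal yields $\psi\equiv 0$, i.e., $\phi=c\vol_n$.

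The principal technical difficulty is the combinatorial dissection argument of Step 2, establishing shear invariance of $F$ in arbitrary dimension; the final extension in Step 3 is standard once the parallelepiped case is secured.
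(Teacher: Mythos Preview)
The paper does not prove this theorem; it is quoted as background from Hadwiger's book, so there is no ``paper's own proof'' to compare against. I can only assess the argument on its merits.

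Your Steps~1 and~2 are fine. The simplicity argument via McMullen's decomposition on a hyperplane is clean, and the shear-invariance dissection (cut a triangle off one side of the parallelepiped and translate it to the other, then take a product with $P(v_3,\dots,v_n)$) is standard and works as you indicate.

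Step~3, however, has a genuine gap. Your polarization argument correctly shows that $\psi^*$ vanishes on every $n$-tuple of segments, and hence by Minkowski-multilinearity that $\psi$ vanishes on every zonotope. But zonotopes are only dense in the \emph{origin-symmetric} convex bodies, so continuity alone does not propagate the vanishing to all of $\K(V)$. The ``$n!$-fold dissection relating simplices back to parallelepipeds'' goes the wrong way: it expresses $\psi(\text{parallelepiped})$ as a \emph{sum} $\sum_{\sigma\in S_n}\psi(\Delta_\sigma)$ over the $n!$ path simplices, but these simplices are related to one another by permutation matrices, not by translations, so you cannot conclude that each $\psi(\Delta_\sigma)=0$. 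In fact this obstruction is real: for $n\ge 2$ there exist nonzero simple, continuous, translation-invariant valuations (the odd part in the Klain--Schneider theorem) that vanish on all parallelepipeds, so some extra input beyond simplicity and the parallelepiped case is required --- namely, the $n$-homogeneity, which you have not yet used in Step~3.

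The quickest repair, using results already stated in the paper, is to bypass Steps~2 and~3 entirely: once simplicity is established (your Step~1), apply the Klain--Schneider characterization (Theorem~\ref{simple} here) to write $\phi(K)=c\vol(K)+V(K,\dots,K,f)$ with $f$ odd; the second term is $(n-1)$-homogeneous, so $n$-homogeneity of $\phi$ forces it to vanish, giving $\phi=c\vol_n$.
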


A characterization for valuations of degree $n-1$ was given by McMullen. It will be crucial for the proof of Theorem \ref{contravariant}. 
\begin{Theorem}[\cite{mcmullen80}]\label{mcmullenN1}Let $V$ be an $n$-dimensional vector space and $\phi\in\Val_{n-1}$.
Then there exists a continuous, 1-homogeneous function $\varphi:V^*\to\R$ such that for all $K\in\K(V)$
$$\phi(K)=V(K,\dots,K,\varphi).$$
The function $\varphi$ is unique up to a linear function.
\end{Theorem}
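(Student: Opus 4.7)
The plan is to represent $\phi$ as an integral against the top-order area measure on the unit sphere, and then recognize the integrand as the $1$-homogeneous function $\varphi$ via the extension of mixed volumes to continuous $1$-homogeneous integrands recalled in the background section.

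First I would polarize. Since $\phi\in\Val_{n-1}$, evaluating on Minkowski combinations $\phi(\lambda_1 K_1+\cdots+\lambda_{n-1}K_{n-1})$ produces---by the valuation property together with $(n-1)$-homogeneity---a polynomial in the $\lambda_i$, whose coefficient of $\lambda_1\cdots\lambda_{n-1}$ defines a symmetric, Minkowski-multilinear, translation-invariant and continuous functional $\bar\phi(K_1,\dots,K_{n-1})$ with $\bar\phi(K,\dots,K)=\phi(K)$. This is the standard polarization underlying the McMullen decomposition \eqref{mcmullen_dec}.

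The central step is to establish an integral representation
\begin{equation*}
\bar\phi(K_1,\dots,K_{n-1})=\tfrac{1}{n}\int_{S^{n-1}} g(u)\,dS(K_1,\dots,K_{n-1},u)
\end{equation*}
for some continuous $g:S^{n-1}\to\R$, where $S(K_1,\dots,K_{n-1},\cdot)$ is the mixed area measure. To obtain it, I would fix a smooth, strictly convex reference body $B$ and analyse the map $K\mapsto\bar\phi(K,B,\dots,B)$: it is continuous, translation invariant, Minkowski-additive and $1$-homogeneous, hence expressible as $\int h(K,u)\,d\mu_B(u)$ for a signed Borel measure $\mu_B$ on $S^{n-1}$. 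Since $B$ is smooth and strictly convex, $S(\,\cdot\,,B,\dots,B,\cdot)$ has a strictly positive continuous density, so one can write $d\mu_B=g_B\,dS(\,\cdot\,,B,\dots,B,\cdot)$ with $g_B$ continuous. Symmetry of $\bar\phi$, weak continuity of mixed area measures, and Hausdorff approximation of arbitrary bodies by smooth, strictly convex ones then force $g_B$ to be independent of $B$, yielding the sought $g$. Specializing $K_1=\dots=K_{n-1}=K$ gives $\phi(K)=\tfrac{1}{n}\int_{S^{n-1}} g\,dS_{n-1}(K,\cdot)$. Extending $g$ to a continuous $1$-homogeneous function $\varphi:V^*\to\R$ via $\varphi(tu)=tg(u)$ for $t\geq 0$ and $u\in S^{n-1}$, the classical formula $V(K,\dots,K,L)=\tfrac{1}{n}\int h(L,u)\,dS_{n-1}(K,u)$ combined with the $1$-homogeneous extension of mixed volumes recalled in the background identifies this integral with $V(K,\dots,K,\varphi)$.

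Uniqueness modulo linear functions reduces to the claim that if $V(K,\dots,K,\varphi)=0$ for every $K\in\K(V)$---equivalently $\int_{S^{n-1}} g\,dS_{n-1}(K,\cdot)=0$ for every $K$---then $g$ is the restriction to $S^{n-1}$ of a linear functional on $V^*$. This follows from Aleksandrov's uniqueness theorem for top-order area measures, applied after decomposing $g$ into its even and odd components (linear functionals contribute only to the odd part). The principal obstacle is the integral-representation step: one must genuinely factor $\bar\phi$ through the cone of mixed area measures, which rests on the nontrivial density of the linear span of $\{S_{n-1}(K,\cdot)\}_{K\in\K(V)}$ in the space of continuous signed measures modulo the annihilator of linear functionals---an Aleksandrov--Fenchel--Jessen style input.
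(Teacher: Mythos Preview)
The paper does not give its own proof of this statement: it is quoted as a background result of McMullen \cite{mcmullen80} and used without argument. So there is nothing in the paper to compare your proposal against, and strictly speaking no proof is expected here.

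That said, your sketch has a genuine gap at the ``central step.'' After obtaining the $\Val_1$-representation $\bar\phi(K,B,\dots,B)=\int h(K,u)\,d\mu_B(u)$, you write ``$d\mu_B=g_B\,dS(\,\cdot\,,B,\dots,B,\cdot)$.'' This identity is not well-posed: the right-hand side is not a single measure but a family of measures indexed by the body in the first slot, while $\mu_B$ is a fixed signed measure. What you actually need is a continuous $g_B$ such that
\[
\int_{S^{n-1}} h(K,u)\,d\mu_B(u)=\tfrac{1}{n}\int_{S^{n-1}} g_B(u)\,dS(K,B,\dots,B,u)\qquad\text{for every }K,
\]
i.e.\ that the functional $K\mapsto\bar\phi(K,B,\dots,B)$ factors through the mixed-area-measure map $K\mapsto S(K,B,\dots,B,\cdot)$. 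This is not a Radon--Nikodym statement and does not follow from smoothness of $B$; it is essentially the content of the theorem itself (in McMullen's original argument one first shows, for polytopes, that $\phi(P)$ depends only on the facet data encoded in $S_{n-1}(P,\cdot)$, and then passes to the limit). The subsequent claim that $g_B$ is independent of $B$ by ``symmetry and approximation'' inherits the same circularity. Your uniqueness paragraph is closer to correct, but note that Aleksandrov's theorem handles the even part; the odd part (beyond linear functions) requires the separate fact that odd continuous functions orthogonal to all $S_{n-1}(K,\cdot)$ are linear, which is a distinct density statement.
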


A valuation $\phi\in\Val$ is called \emph{simple} if $\phi(K)=0$ for every $K\in\K(V)$ with $\mathrm{dim} K< n$.
\begin{Theorem}[\cite{klain95, schneider96}]\label{simple}Let $V$ be an $n$-dimensional vector space and  let $\phi:\K(V)\to\R$ be a continuous, translation invariant, simple valuation. Then,
$$\phi(K)=c\vol(K)+V(K,\dots,K,f),$$
where $c\in\R$ is a constant and $f:V^*\to\R$ is an odd, 1-homogeneous, continuous real function unique up to a linear map.
\end{Theorem}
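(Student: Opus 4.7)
The plan is to apply McMullen's decomposition \eqref{mcmullen_dec} to reduce to homogeneous components $\phi_k \in \Val_k$, handle the top two degrees with Theorems \ref{maxDegree} and \ref{mcmullenN1}, and show that all intermediate degrees vanish. First I would write $\phi = \sum_{k=0}^n \phi_k$. Evaluating the polynomial identity $\phi(tK) = \sum_k t^k \phi_k(K)$ at distinct $t > 0$, and using that $\phi(tK) = 0$ whenever $\dim K < n$, forces each $\phi_k$ to be simple as well. In particular the constant term vanishes, $\phi_0 = \phi_0(\{0\}) = 0$.

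By Theorem \ref{maxDegree}, $\phi_n = c \vol$ for some $c \in \R$. By Theorem \ref{mcmullenN1}, there is a continuous, 1-homogeneous $f: V^* \to \R$, unique up to a linear summand, with $\phi_{n-1}(K) = V(K,\ldots,K,f)$. To extract the odd symmetry I would test on bodies $K$ of dimension $n-1$ lying in a hyperplane $H \subset V$ with unit conormal $\nu_H \in V^*$: the surface area measure is then $V_{n-1}(K)(\delta_{\nu_H} + \delta_{-\nu_H})$, giving $V(K,\ldots,K,f) = \tfrac{1}{n}(f(\nu_H) + f(-\nu_H))V_{n-1}(K)$. Simplicity of $\phi_{n-1}$ forces $f(\nu) + f(-\nu) = 0$ for all nonzero $\nu \in V^*$; decomposing $f = f^+ + f^-$ into even and odd parts, this condition reads $2 f^+(\nu) = 0$, so $f$ is automatically odd.

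The main obstacle is to show $\phi_k \equiv 0$ for each $1 \le k \le n-2$. I would split $\phi_k = \phi_k^+ + \phi_k^-$ into its even and odd parts via $\phi_k^\pm(K) = \tfrac{1}{2}(\phi_k(K) \pm \phi_k(-K))$; each part remains simple, translation invariant, continuous, and $k$-homogeneous. For the even part, I would attach the \emph{Klain function} on $\Gr_k(V)$: for each $E \in \Gr_k(V)$, Hadwiger's theorem applied to the restriction of $\phi_k^+$ to $\K(E)$ yields a density $\Kl_{\phi_k^+}(E) \in \R$ with $\phi_k^+|_{\K(E)} = \Kl_{\phi_k^+}(E) \cdot \vol_E$. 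Since $\dim K \le k < n$ for any $K \in \K(E)$, simplicity forces $\phi_k^+|_{\K(E)} \equiv 0$ and hence $\Kl_{\phi_k^+} \equiv 0$; Klain's injectivity theorem for even, translation-invariant, continuous valuations then yields $\phi_k^+ = 0$. For the odd part $\phi_k^-$, I would invoke Schneider's analysis of odd simple valuations: by polytopal approximation and a careful accounting of contributions from codimension-one faces, a simple, odd, translation-invariant, continuous valuation on $\K(V)$ must be homogeneous of degree $n-1$, forcing $\phi_k^- = 0$ for $k \le n-2$.

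Combining the above, $\phi(K) = c\vol(K) + V(K,\ldots,K,f)$ with $f$ odd, continuous, and 1-homogeneous; the uniqueness of $f$ modulo a linear map is inherited directly from Theorem \ref{mcmullenN1}.
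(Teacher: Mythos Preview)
The paper does not prove this theorem; it is quoted in the background section as a known result from \cite{klain95,schneider96} and then used as a black box in Section~3. There is no proof in the paper to compare your proposal against.

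On the proposal itself: the reduction via McMullen's decomposition and the treatment of degrees $0$, $n-1$, and $n$ are correct, including the argument that $f$ must be odd. The weak point is the handling of degrees $1\le k\le n-2$. For the odd part you write that you ``would invoke Schneider's analysis of odd simple valuations''---but \cite{schneider96} is precisely one of the two sources for the theorem you are proving, so this step is circular as stated; the phrase ``a careful accounting of contributions from codimension-one faces'' is a pointer to an argument, not the argument itself. For the even part, Klain's injectivity theorem \cite{klain00} does the job, but its proof rests on the characterization of even simple valuations established in \cite{klain95}; it is more direct (and not anachronistic) simply to use Klain's result that an even, simple, continuous, translation-invariant valuation is a constant multiple of volume, which immediately kills $\phi_k^+$ for $k<n$.
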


From the previous theorems follow the next two useful results.
\begin{Lemma}[\cite{klain00}]\label{lemma24a}Let $V$ be an $n$-dimensional vector space and $\phi\in\Val_{j}$,
for a given $j\in\{0,1,\dots,n-1\}$.
If $\phi(K)=0$ whenever $\dim K= j$, then $\phi(K)+\phi(-K)=0$.
\end{Lemma}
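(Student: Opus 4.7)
The plan is to split $\phi$ into its even and odd parts and reduce the problem to showing the even part vanishes identically. Set
\[
\phi^{+}(K) := \tfrac{1}{2}\bigl(\phi(K)+\phi(-K)\bigr), \qquad \phi^{-}(K) := \tfrac{1}{2}\bigl(\phi(K)-\phi(-K)\bigr).
\]
Since the reflection $K\mapsto -K$ commutes with unions/intersections and with dilations, and preserves translation invariance (translation by $t$ becomes translation by $-t$), both operators inherit continuity, translation invariance, the valuation property, and $j$-homogeneity from $\phi$; hence $\phi^\pm\in\Val_j$. Clearly $\phi^+(-K)=\phi^+(K)$, $\phi^-(-K)=-\phi^-(K)$, and $\phi=\phi^++\phi^-$, so the identity $\phi(K)+\phi(-K)=0$ is equivalent to $\phi^+\equiv 0$.

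Since $-K$ is $j$-dimensional whenever $K$ is, the hypothesis on $\phi$ immediately gives $\phi^+(K)=0$ for every $K\in\K(V)$ with $\dim K=j$. Pick an arbitrary subspace $E\in\Gr_j(V)$: the restriction $\phi^+|_{\K(E)}$ is a continuous, translation invariant valuation on the $j$-dimensional space $E$ that is homogeneous of the top degree~$j$, so Hadwiger's theorem (Theorem \ref{maxDegree}) applied inside $E$ yields a constant $c_{\phi^+}(E)\in\R$ with $\phi^+(K)=c_{\phi^+}(E)\,\vol_j(K)$ for all $K\subset E$. The vanishing on $j$-dimensional bodies forces $c_{\phi^+}(E)=0$ for every $E\in\Gr_j(V)$, so the Klain function of the even valuation $\phi^+$ is identically zero.

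The decisive step is now to invoke Klain's injectivity theorem from \cite{klain00}: an even element of $\Val_j$ on an $n$-dimensional real vector space is uniquely determined by its Klain function $E\mapsto c_\phi(E)$ on $\Gr_j(V)$. Applied to $\phi^+$, this yields $\phi^+\equiv 0$, whence $\phi(K)+\phi(-K)=2\phi^+(K)=0$ for every $K\in\K(V)$, as required.

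The main content of the argument is the Klain injectivity statement; the even/odd decomposition and the slice-by-slice application of Hadwiger's theorem are essentially bookkeeping. A proof that avoids Klain's theorem would presumably proceed by induction on $n-j$, using the characterization of simple valuations (Theorem \ref{simple}) to peel off the top-degree simple piece on successive quotients, but this route is noticeably more cumbersome than the direct appeal to \cite{klain00} adopted above, which is the shortest obstacle to formalize.
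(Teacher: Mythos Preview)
Your argument is correct. The paper does not supply its own proof of this lemma: it merely records that the statement ``follows from the previous theorems'' and cites \cite{klain00}. Your proof is exactly the standard one underlying that citation --- split into even and odd parts, observe that the even part has vanishing Klain function by Theorem~\ref{maxDegree} applied in each $j$-plane, and then invoke Klain's injectivity theorem (the main result of \cite{klain00}, itself proved by induction from Theorem~\ref{simple}) to conclude $\phi^{+}\equiv 0$. So your write-up is faithful to the source the paper defers to; there is nothing to correct or compare.
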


\begin{Lemma}[\cite{schneider_schuster06}]\label{lemma24b}Let $V$ be an $n$-dimensional vector space and $\phi\in\Val_{j}$,
for a given $j\in\{0,1,\dots,n-1\}$.
If $\phi(K)=0$ whenever $\dim K= j+1$, then $\phi\equiv 0$.
\end{Lemma}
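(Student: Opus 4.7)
The plan is to mimic the strategy of \cite{abardia.bernig} for $m\geq 3$: fix $w \in W$, consider $\phi_w := h(Z(\cdot),w)$ as a real-valued translation invariant continuous valuation on $\K(W)$ (real dimension $4$), decompose via McMullen into homogeneous pieces $f_k(K,w)$ of degree $k$ in $K$ ($k=0,\ldots,4$), and analyze each $f_k$ separately using the inherited contravariance $f_k(gK,w) = f_k(K,g^{-1}w)$ and the fact that $f_k(K,\cdot)$ is continuous and $1$-homogeneous in $w$. The new feature in the $m=2$ case is the appearance of a genuine degree-$1$ piece, $\tilde\D_M$, whose existence relies on the identification $\Phi\colon W\to W^*$ that only exists when $\dim_\C W=2$.

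I first dispose of degrees $0$, $2$, and $4$. For $k=0$, $f_0(K,w)=c(w)$ is $\SL(W,\C)$-invariant in $w$; since $\SL(2,\C)$ acts transitively on $W\setminus\{0\}$ and $c$ is $1$-homogeneous, $c\equiv 0$. For $k=4$, Theorem \ref{maxDegree} gives $f_4(K,w)=c(w)\vol(K)$, and since $|\det_\R g|=|\det_\C g|^2=1$ for $g\in\SL(W,\C)$, the same transitivity argument applies. The hardest step is $k=2$: I plan to use Lemma \ref{lemma24b} to reduce to showing $f_2(K,w)=0$ whenever $\dim_\R K=3$; restricting $f_2(\cdot,w)$ to the real hyperplane $H=\spa_\R K$ and applying Hadwiger's theorem on $H$ yields $f_2(K,w)=c(H,w)\vol_H(K)$. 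The function $c$ is then an equivariant section on $\Gr_3^\R(W)$ that is $1$-homogeneous in $w$, and the $\SL(W,\C)$-orbit structure on real hyperplanes of $W$ together with transitivity on $W\setminus\{0\}$ forces $c\equiv 0$. Lemma \ref{lemma24a} then handles bodies of lower dimension.

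Once only $f_1$ and $f_3$ survive, identification is the final task. Since $f_3\in\Val_3$ and $\dim_\R W=4$, Theorem \ref{mcmullenN1} provides a continuous $1$-homogeneous $\varphi_w\colon W^*\to\R$, unique modulo linear, with $f_3(K,w)=V(K,K,K,\varphi_w)$; combining contravariance with the transformation law for mixed volumes yields $\varphi_{g^{-1}w}\equiv \varphi_w\circ g^{-*}$ (modulo linear). Using that $\SL(2,\C)$ preserves the pairing $\langle\xi,w\rangle\in\C$ and acts transitively on pairs $(\xi,w)$ with a fixed value of this pairing, $\varphi_w(\xi)$ must be a $1$-homogeneous function of $\langle\xi,w\rangle\in\C$, hence the support function of some $N\in\K(\C)$ evaluated there; thus $f_3(K,w)=h(\Pi_N K,w)$. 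Finally, $f_1(K,w)$ is Minkowski additive in $K$ and a support function in $w$ by Lemma \ref{support}; contravariance forces it to depend on $K$ only through the planar convex set $\det(K,w)\subset\C$, reducing the problem to classifying continuous, translation invariant, $S^1$-equivariant Minkowski valuations on $\K(\C)$, which yields $f_1(K,w)=\int_{S^1}h(\det(K,w),\alpha)\,dS(M,\alpha)=h(\tilde\D_M K,w)$ for some $M\in\K(\C)$. Uniqueness of $M$ and $N$ up to translation then follows by evaluating on segments and discs. The main obstacle will be the degree-$2$ elimination, where the equivariant-section analysis on $\Gr_3^\R(W)$ must be carried out carefully.
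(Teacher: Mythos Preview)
Your proposal does not address the stated lemma at all. Lemma~\ref{lemma24b} is a general fact about real-valued valuations, cited from \cite{schneider_schuster06} and not proved in the present paper: if $\phi\in\Val_j$ vanishes on all $(j{+}1)$-dimensional bodies, then $\phi\equiv 0$. What you have written is instead a proof outline for the main classification result, Theorem~\ref{contravariant}, in which Lemma~\ref{lemma24b} is merely one of the tools invoked at the very end.

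Even read as a sketch for Theorem~\ref{contravariant}, your degree-$2$ elimination contains a genuine gap. You propose to restrict $f_2(\cdot,w)$ to a $3$-dimensional real hyperplane $H$ and ``apply Hadwiger's theorem on $H$'' to conclude $f_2(K,w)=c(H,w)\vol_H(K)$. But $f_2$ is homogeneous of degree~$2$, whereas $\vol_H$ has degree~$3$; Hadwiger's characterization (Theorem~\ref{maxDegree}) applies only to \emph{top}-degree valuations, so on $H$ it describes $\Val_3(H)$, not $\Val_2(H)$. The paper's route to killing $f_2$ is substantially harder: it first shows $f_2$ vanishes on $2$-dimensional bodies (Lemma~\ref{LemmaDim2}, where Hadwiger \emph{does} apply since degree~$2$ is top degree there), then uses Lemma~\ref{lemma24a} to obtain oddness, then on each $3$-dimensional subspace $E$ invokes the Klain--Schneider simple-valuation theorem (Theorem~\ref{simple}) to write $f_2(K,u)=V(K,K,\varphi_u)$, and finally carries out an explicit computation on a one-parameter family of simplices in $E$ to force $\varphi_u$ to be linear. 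Only after all this does Lemma~\ref{lemma24b} finish the argument. Your proposed equivariant-section analysis on $\Gr_3^{\R}(W)$ does not shortcut this work, because there is no Hadwiger-type identification of $f_2|_H$ with a multiple of volume to begin with.
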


\section{Proof of Theorem \ref{contravariant}}
In this section $W$ denotes a 2-dimensional complex vector space.

Let $Z:\K(W)\to\K(W^*)$ be a continuous, translation invariant Minkowski valuation, which is $\SL(W,\C)$-contravariant. 
Applying McMullen's decomposition \eqref{mcmullen_dec} to $Z$, we get
$$h(ZK,u)=\sum_{i=0}^{4}f_{i}(K,u),$$
where $f_{i}(K,u)$ is a continuous 1-homogeneous function of $u$. Using Lemma \ref{support} we have that $f_{0}(K,\cdot)$ and $f_{4}(K,\cdot)$ are support functions. For a fixed direction $u$, $f_{0}(\cdot,u)$ and $f_{4}(\cdot,u)$ are continuous, translation invariant valuations of degree of homogeneity 0 and 4, resp. Thus, they are a multiple of the Euler characteristic and the volume, resp. (the latter follows from Theorem \ref{maxDegree}), but this is not compatible with the $\SL(W,\C)$-contravariance property unless the multiple is the null function. Therefore, we have 
\begin{equation}\label{hf2}h(ZK,u)=f_{1}(K,u)+f_{2}(K,u)+f_{3}(K,u),\quad\forall u\in W,\,K\in\K(W).\end{equation}
Again by Lemma \ref{support}, $f_{1}$ and $f_{3}$ are support functions of degree of homogeneity 1 and 3, respectively. We claim that $f_{3}(K,u)=h(\Pi_{N}K,u)$ for some $N\in\K(\C)$ and $f_{1}(K,u)=h(\tilde\D_{M}K,u)$, with $M\in\K(\C)$, as given in \eqref{contra1}. Indeed, the Minkowski valuations defined by \eqref{eqcontram} (resp. by \eqref{eqcovm}) are also continuous, translation invariant and $\SL(W,\C)$-contravariant (resp. $\SL(W,\C)$-covariant) Minkowski valuations when $\dim_{\C}W=2$. From the proof of Theorem \ref{contram} (resp. \ref{covm}), no other valuations of fixed degree of homogeneity 3 (resp. 1) can appear even in the 2-dimensional case (see also \cite{abardia12}). Thus, the claim follows directly for the expression of $f_{3}$ and for $f_{1}$, it follows from the identification $\Phi$ between $W$ and $W^*$ induced by the map in \eqref{detId}.

\begin{Lemma}\label{det32}Let $Z:\K(W)\to\K(W^*)$ be a continuous, translation invariant, $\SL(W,\C)$-contravariant Minkowski valuation given by \eqref{hf2}.
Then, $f_{2}:\K(W)\times W\to\R$ is a continuous function satisfying
$$f_{2}(gK,u)=(\mathrm{det}_{\C}(g))^{3/2}f_{2}(K,g^{-1}u)$$ for every $g\in\GL(W,\C)$ with positive determinant. (Here, we denote by $\det_{\C}g$ the determinant of $g$ as a complex endomorphismus of $W$, that is, the determinant of the associated $2\times 2$ complex matrix.)
\end{Lemma}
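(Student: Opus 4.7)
The plan is to split the lemma into three independent assertions: the $\SL(W,\C)$-contravariance of $f_2$, its joint continuity in $(K,u)$, and the passage from $\SL(W,\C)$ to $\GL(W,\C)$ with positive determinant.

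First, I would obtain the $\SL(W,\C)$-contravariance of $f_2$ essentially for free from the identifications already made in the preceding discussion. Since $f_1(K,\cdot)=h(\tilde\D_M K,\cdot)$ and $f_3(K,\cdot)=h(\Pi_N K,\cdot)$ are $\SL(W,\C)$-contravariant by construction, and $Z$ itself is $\SL(W,\C)$-contravariant, the relation
$$f_2(K,u)=h(ZK,u)-f_1(K,u)-f_3(K,u)$$
immediately yields $f_2(gK,u)=f_2(K,g^{-1}u)$ for every $g\in\SL(W,\C)$.

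Joint continuity of $f_2$ in $(K,u)$ follows from a standard Vandermonde argument applied to McMullen's decomposition. For fixed $(K,u)$ the function $\lambda\mapsto h(Z(\lambda K),u)=\sum_{i=0}^{4}\lambda^i f_i(K,u)$ is a polynomial in $\lambda\ge 0$; evaluating at five distinct positive values $\lambda_0,\dots,\lambda_4$ and inverting the Vandermonde matrix expresses each $f_i(K,u)$ as a fixed linear combination of the jointly continuous functions $(K,u)\mapsto h(Z(\lambda_j K),u)$, whence the joint continuity of $f_2$ in particular.

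The only non-automatic step is the extension to $\GL(W,\C)$ with positive determinant. Given $g\in\GL(W,\C)$ with $\det_\C g=t>0$, I would factor $g=\sqrt{t}\,h$ with $h:=g/\sqrt{t}$. Since $W$ has complex dimension $2$, $\det_\C(\sqrt{t}\,I)=(\sqrt{t})^2=t$, so $h\in\SL(W,\C)$. Combining the $2$-homogeneity of $f_2$ in $K$, the $\SL(W,\C)$-contravariance just established, and the $1$-homogeneity in $u$, one gets
$$f_2(gK,u)=t\,f_2(hK,u)=t\,f_2(K,h^{-1}u)=t\,\sqrt{t}\,f_2(K,g^{-1}u)=(\det_\C g)^{3/2}f_2(K,g^{-1}u).$$
I do not expect any serious obstacle; the only conceptually noteworthy point is that the exponent $3/2$ decomposes as $1+\tfrac{1}{2}$, where the $1$ reflects the homogeneity of degree $2$ in $K$ and the $\tfrac{1}{2}$ the homogeneity of degree $1$ in $u$, and both half-powers of $t$ appear precisely because $W$ is $2$-dimensional over $\C$.
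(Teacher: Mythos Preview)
Your proposal is correct and follows essentially the same route as the paper: factor $g$ as a positive scalar times an element of $\SL(W,\C)$ and combine the $2$-homogeneity in $K$, the $\SL(W,\C)$-contravariance of $f_2$, and the $1$-homogeneity in $u$ to produce the exponent $3/2$. The only cosmetic difference is that the paper writes $g=t\,g_0$ with $\det_\C g=t^2$ (so the factor $t^3=(\det_\C g)^{3/2}$ appears directly), whereas you write $g=\sqrt{t}\,h$ with $\det_\C g=t$; your added remarks on continuity and on why $f_2$ inherits the $\SL(W,\C)$-contravariance simply make explicit what the paper records earlier in Section~2.3.
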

\begin{proof}
Let $g\in\GL(W,\C)$ have positive determinant. Then, there are $t>0$ and $g_{0}\in\SL(W,\C)$ such that $g=tg_{0}$. Notice that $\det_{\C} g=t^2$. Since $f_{2}$ is 2-homogeneous in the variable of the convex body and 1-homogeneous in the variable of the direction, we have
$$f_{2}(gK,u)=f_{2}(tg_{0}K,u)=t^2f_{2}(K,g_{0}^{-1}u)=t^3f_{2}(K,g^{-1}u)=(\mathrm{det} _{\C}g)^{3/2}f_{2}(K,g^{-1}u),$$
and the result follows.
\end{proof}

\begin{Lemma}\label{LemmaDim2}Let $Z:\K(W)\to\K(W^*)$ be a continuous, translation invariant, $\SL(W,\C)$-contravariant Minkowski valuation which is given by
$$h(ZK,u)=f_{1}(K,u)+f_{2}(K,u)+f_{3}(K,u),$$
where $f_{1}(K,\cdot)=h(\tilde\D_{M}K,\cdot)$ and $f_{3}(K,\cdot)=h(\Pi_{N}K,\cdot)$ for some $M,N\in\K(\C)$, with $\tilde\D_{M}$ and $\Pi_{N}$ defined in \eqref{contra1} and \eqref{contra3}, resp. 
Then, $f_{2}(K,\cdot)\equiv 0$ whenever $\dim K\leq 2.$
\end{Lemma}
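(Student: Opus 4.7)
The case $\dim K \le 1$ is automatic: any continuous, translation-invariant valuation of positive degree vanishes on convex bodies of strictly smaller real dimension, so $f_2(K, \cdot) \equiv 0$ there. I focus therefore on $\dim K = 2$. After translation, $K$ lies in a 2-dimensional real linear subspace $E \subset W$, and Hadwiger's theorem (Theorem \ref{maxDegree}) applied to the degree-2 valuation $K \mapsto f_2(K, u)$ on $\K(E)$ gives
\[
f_2(K, u) = c(E, u)\, \vol_E(K)
\]
for a continuous 1-homogeneous function $c(E, \cdot): W \to \R$. My goal is to show $c(E, \cdot) \equiv 0$ for every 2-dimensional real subspace $E$ of $W$. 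Since $E \cap iE$ is $i$-invariant and hence of even real dimension, $E$ is either a complex line ($iE = E$) or totally real ($E \cap iE = 0$).

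If $E = \C e_1$ is a complex line, I choose a complement $e_2$, so that the stabilizer of $E$ in $\SL(W, \C)$ contains the Borel subgroup $\bigl\{ \bigl(\begin{smallmatrix} \lambda & a \\ 0 & \lambda^{-1} \end{smallmatrix}\bigr) : \lambda \in \C \setminus \{0\},\, a \in \C \bigr\}$ in the basis $(e_1, e_2)$. These act on $E$ by multiplication by $\lambda$, so $\vol_E(gK) = |\lambda|^2 \vol_E(K)$, and the contravariance $f_2(gK, u) = f_2(K, g^{-1}u)$ becomes $|\lambda|^2 c(E, u) = c(E, g^{-1}u)$. Using the unipotent elements ($\lambda = 1$, $a \in \C$) and choosing $a = z_1/z_2$ when $u = z_1 e_1 + z_2 e_2$ has $z_2 \neq 0$ reduces $c(E, u)$ to $c(E, z_2 e_2)$; then the diagonal elements ($\lambda = r > 0$, $a = 0$) together with the 1-homogeneity give $r^2 c(E, z_2 e_2) = r\, c(E, z_2 e_2)$, forcing $c(E, \cdot) \equiv 0$.

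The delicate case is $E$ totally real. I pick an $\R$-basis $e_1, e_2$ of $E$, which is automatically a $\C$-basis of $W$; the stabilizer of $E$ in $\SL(W, \C)$ is then $\SL(E, \R) \cong \SL(2, \R)$ embedded $\C$-linearly. As this action preserves $\vol_E$, the contravariance forces $c(E, \cdot)$ to be $\SL(2, \R)$-invariant. Writing $u = z_1 e_1 + z_2 e_2$ with $z_j = x_j + i y_j$, the decomposition $W = E \oplus iE$ exhibits $W$ as two copies of the standard representation of $\SL(2, \R)$ with coordinates $(x_1, x_2)$ and $(y_1, y_2)$; the classical invariant theory of $\SL(2, \R)$ on $\R^2 \oplus \R^2$ yields the single generating polynomial invariant
\[
\Delta(u) := x_1 y_2 - y_1 x_2 = \im(\bar z_1 z_2).
\]
A routine orbit-and-scaling analysis (combining continuity and the 1-homogeneity of $c$ with the 2-homogeneity of $\Delta$) then gives $c(E, u) = F(\Delta(u))$, where $F(s) = C_+ \sqrt{s}$ for $s > 0$, $F(s) = C_- \sqrt{-s}$ for $s < 0$, and $F(0) = 0$, for some constants $C_\pm \in \R$.

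The main obstacle is now to kill $C_+$ and $C_-$, since the $\SL$-equivariance has been exhausted. For this I invoke Lemma \ref{support}: when $\dim K \le 2$ the pieces $f_3(K, \cdot)$ and $f_4(K, \cdot)$ vanish (as valuations of degree $\ge 3$ are zero on bodies of real dimension $< 3$) and $f_0 \equiv 0$ was already noted, so the McMullen expansion reduces to $h(ZK, \cdot) = f_1(K, \cdot) + f_2(K, \cdot)$; the extremal piece $f_2(K, \cdot)$ must therefore be a support function, which makes $c(E, \cdot)$ subadditive. Testing subadditivity on the three pairs $(e_1, i e_2)$, $(e_1, -i e_2)$ and $(e_1 + i e_2, e_1 - i e_2)$, for which the values of $\Delta$ at $u$, $v$ and $u+v$ are $(0, 0, 1)$, $(0, 0, -1)$ and $(1, -1, 0)$ respectively, yields $C_+ \le 0$, $C_- \le 0$ and $0 \le C_+ + C_-$, forcing $C_+ = C_- = 0$ and completing the proof.
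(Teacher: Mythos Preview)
Your argument is correct, but it takes a genuinely different route from the paper in the totally real case. The paper handles only totally real planes $E=\mathrm{span}_{\R}\{e_1,e_2\}$ (complex lines are then covered by density and continuity), and instead of restricting to the $\SL(2,\R)$-stabilizer it invokes the $\GL^+(W,\C)$-scaling of Lemma~\ref{det32}: for $g e_1=\lambda e_1$, $g e_2=e_2$ with $\lambda>0$ one gets $f_2(gK,\alpha e_1)=\lambda^{1/2}f_2(K,\alpha e_1)$, whereas Hadwiger on $E$ gives $f_2(gK,\alpha e_1)=\lambda\, f_2(K,\alpha e_1)$; this weight mismatch kills $f_2(K,\alpha e_j)$ for all $\alpha\in\C$, $j=1,2$, and then the support-function property forces $Z_2K\subset(\C e_1)^\circ\cap(\C e_2)^\circ=\{0\}$. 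Your approach, by contrast, stays inside $\SL(W,\C)$ and therefore cannot see a weight mismatch; instead you identify $c(E,\cdot)$ as an $\SL(2,\R)$-invariant, pin it down via invariant theory to $C_\pm\sqrt{|\Delta|}$, and then use subadditivity at three well-chosen points. The paper's route is shorter and explains why Lemma~\ref{det32} was isolated; yours is more self-contained, makes the candidate degree-2 contributions explicit, and shows that convexity (rather than an enlarged symmetry group) already suffices to exclude them.
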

\begin{proof}
Let $K\in\K(W)$ be a 2-dimensional convex set. Then, $f_{3}(K,u)=0$ for every $u\in W$ and 
$$h(ZK,u)=f_{1}(K,u)+f_{2}(K,u),$$
so that, by Lemma \ref{support} we have that $f_{2}(K,\cdot)$ is a support function.

Suppose that $K$ is contained in the 2-dimensional real vector space $E=\mathrm{span}_{\R}\{e_{1},e_{2}\}$ with $e_{1}, e_{2}$ linearly independent vectors over $\C$. Define $g\in\GL(W,\C)$ by $ge_{1}=\lambda e_{1}$, $ge_{2}= e_{2}$ with $\lambda\in\R_{>0}$. Notice that $gE=E$. Then, by the previous lemma we have
$$f_{2}(gK,\alpha e_{1})=\lambda^{3/2}f_{2}(K,g^{-1}\alpha e_{1})=\lambda^{1/2} f_{2}(K,\alpha e_{1}),$$
for every $\alpha\in\C$. By Theorem \ref{maxDegree}, 
$$f_{2}(gK,\alpha e_{1})=c(\alpha e_{1})\vol(gK)=\lambda f_{2}(K,\alpha e_{1}).$$
Thus, $$f_{2}(K,\alpha e_{1})=\lambda^{1/2}f_{2}(K,\alpha e_{1}),$$
for every $\lambda>0$, which implies $f_{2}(K,\alpha e_{1})=0$ for every $\alpha\in\C$. In a similar way we get $f_{2}(K,\alpha e_{2})=0$ for every $\alpha\in\C$.
Using that $f_{2}(K,\cdot)$ is the support function of a convex body $Z_{2}K\subset  W^*$, we get that $Z_{2}K\subset (\mathrm{span}\{e_{1},ie_{1}\})^{\circ}$ and $Z_{2}K\subset(\mathrm{span}\{e_{2},ie_{2}\})^{\circ}$, where $F^{\circ}$ denotes the annihilator of the subspace $F\subset W$. Thus, $Z_{2}K=\{0\}$ for every $K\subset E$. 

For $e_{1},e_{2}$ linearly independent over $\C$, the orbit of $E=\mathrm{span}_{\R}\{e_{1},e_{2}\}$ under the action of $\SL(W,\C)$ is dense in the space of 2-dimensional planes in $W$. Since $f_{2}$ is continuous, for every 2-dimensional vector space $E$, we get that $f_{2}(K,\cdot)\equiv 0$ for every $K\in\K(E)$.
\end{proof}

\begin{Lemma}Let $Z:\K(W)\to\K(W^*)$ be as in Lemma \ref{LemmaDim2}.
Then, $f_{2}(K,\cdot)\equiv 0$ whenever $\dim K\leq 3$.
\end{Lemma}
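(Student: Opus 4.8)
The plan is to bootstrap from the previous lemma (the case $\dim K\le 2$) using McMullen's characterization of valuations of codimension $1$, exactly as in the proof of Lemma \ref{LemmaDim2} but one dimension higher. Fix a $3$-dimensional real subspace $E\subset W$ and a convex body $K\in\K(E)$. For a fixed direction $u\in W$, the function $f_2(\cdot,u)$ restricted to $\K(E)$ is a continuous, translation invariant valuation which is homogeneous of degree $2$, hence of degree $\dim E-1=2$. By Theorem \ref{mcmullenN1} applied in the $3$-dimensional space $E$, there is a continuous $1$-homogeneous function $\varphi_u:E^*\to\R$ (unique up to a linear function) with $f_2(K,u)=V_E(K,K,\varphi_u)$ for all $K\in\K(E)$, where $V_E$ denotes the mixed volume computed inside $E$. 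The idea is to exploit the scaling relation of Lemma \ref{det32} to force $\varphi_u$, and hence $f_2(\cdot,u)$, to vanish on $\K(E)$, and then conclude by continuity and density of the $\SL(W,\C)$-orbit of a generic $E$.

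First I would choose coordinates adapted to $E$: since the $\SL(W,\C)$-orbit of a generic real $3$-plane is dense (the interesting $3$-planes are those not contained in any complex line and containing exactly one complex line, or the fully generic ones), it suffices by continuity of $f_2$ to treat $E$ of a convenient normal form. Pick a real basis $e_1,e_2,e_3$ of $E$ and consider $g\in\GL(W,\C)$ of the form $ge_1=\lambda e_1$, $ge_j=e_j$ for $j=2,3$ with $\lambda\in\R_{>0}$, chosen so that $gE=E$ (this requires $ie_1$ to behave compatibly; in the worst case one enlarges the list of directions being scaled or uses two successive such maps, as in Lemma \ref{LemmaDim2}). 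On one hand, $V_E(gK,gK,\varphi_u)=\lambda\, V_E(K,K,\varphi_u)$ by the scaling behaviour of mixed volumes inside $E$ (here $|\det g|_E=\lambda$). On the other hand, Lemma \ref{det32} gives $f_2(gK,u)=(\det_\C g)^{3/2}f_2(K,g^{-1}u)=\lambda^{3/2}f_2(K,g^{-1}u)$. Comparing the $\lambda$-dependence on a large enough family of scalings in different coordinate directions will show that $f_2(K,\cdot)$ must be the zero function on $\K(E)$: the degree-$2$ valuation $K\mapsto f_2(K,u)$ scales in $K$ like $\lambda^2$ under the full $\SL$-part but the mixed ratio forces an incompatible power, exactly the contradiction that appeared in Lemma \ref{LemmaDim2}, so $f_2(K,u)=0$.

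There is an alternative route that may be cleaner: once we know (Lemma \ref{LemmaDim2}) that $f_2(K,\cdot)\equiv 0$ whenever $\dim K\le 2$, apply Lemma \ref{lemma24b} inside the $3$-dimensional space $E$ to the valuation $\phi=f_2(\cdot,u)\in\Val_2(E)$: since it vanishes on all convex bodies of dimension $\dim E-1+\! \dots$ — more precisely, we want $\phi$ to vanish on bodies of dimension $j+1=3$; but that is the conclusion we are after, not the hypothesis. So instead we invoke Theorem \ref{mcmullenN1} as above and then use that $f_1(K,\cdot)=h(\tilde\D_M K,\cdot)$ and $f_3(K,\cdot)=h(\Pi_N K,\cdot)$ together with Lemma \ref{support}: for $K$ of dimension $3$ one still has $h(ZK,u)=f_1(K,u)+f_2(K,u)+f_3(K,u)$, and by Lemma \ref{support} applied to the truncated sum (the top and bottom surviving terms are support functions), one deduces $f_2(K,\cdot)$ is a support function of some $Z_2K\subset W^*$. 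Then the scaling argument shows $h(Z_2K,\alpha e_j)=0$ for $j=1,2,3$ and all $\alpha\in\C$, whence $Z_2K\subset\bigcap_j(\mathrm{span}_\C\{e_j\})^\circ=\{0\}$ since $e_1,e_2,e_3$ span $W$ over $\C$ when $E$ is generic.

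The main obstacle I expect is the bookkeeping in the scaling step: ensuring we can find enough elements $g\in\GL(W,\C)$ with positive determinant that preserve a fixed generic $3$-plane $E$ and act by independent real dilations in directions whose $\C$-spans jointly cover $W$. For $\dim_\C W=2$ a real $3$-plane contains exactly one complex line $L=\mathrm{span}_\C\{e\}$ (generically), so diagonal maps $g$ fixing $E$ are more constrained than in Lemma \ref{LemmaDim2}; one likely needs to combine a dilation along $L$ with the induced action transverse to $L$ and track the determinant $\det_\C g$ carefully, possibly composing two such maps. Once the identity $f_2(K,u)=\lambda^{a}f_2(K,g^{-1}u)=\lambda^{b}f_2(K,u)$ with $a\ne b$ is established for a positive-dimensional range of $\lambda$, the conclusion $f_2(K,u)=0$ is immediate, and continuity plus density of the orbit of $E$ upgrades it to all $K$ with $\dim K\le 3$.
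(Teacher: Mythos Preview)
Your proposal has a genuine gap: the one-shot scaling contradiction you are aiming for does not materialize here the way it did in Lemma~\ref{LemmaDim2}.

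First, your alternative route fails outright. For a $3$-dimensional $K$ all three terms $f_1,f_2,f_3$ survive, and Lemma~\ref{support} only tells you that the \emph{extremal} components $f_1$ and $f_3$ are support functions, not the middle one. So you cannot conclude that $f_2(K,\cdot)$ is a support function and then kill it by showing $h(Z_2K,\alpha e_j)=0$; that argument is only available when $\dim K\le 2$.

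Second, in the main route your key identity ``$V_E(gK,gK,\varphi_u)=\lambda\,V_E(K,K,\varphi_u)$'' is wrong: mixed volumes of degree $2$ in a $3$-space do not simply scale, because $\varphi_u$ also transforms. The correct relation is $V_E(gK,gK,\varphi_u)=|\det g|_E\,V_E(K,K,\varphi_u\circ g^{-*})$, which together with Lemma~\ref{det32} yields only a functional equation $\varphi_{g^{-1}u}=\lambda^{1/2}\,\varphi_u\circ g^{-*}+l_{u,\lambda}$ up to a linear term---not an equality $\lambda^a=\lambda^b$ with $a\neq b$. This is exactly what the paper obtains, and it is far from a contradiction.

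What the paper actually does is substantially more work than your sketch: from Lemma~\ref{LemmaDim2} and Lemma~\ref{lemma24a} it first gets that $f_2$ is odd; then it uses the subadditivity of $h(ZK,\cdot)$ together with the explicit form of $h(\Pi_N K,\cdot)$ on a $3$-plane $E=\mathrm{span}_\R\{e_1,ie_1,e_2\}$ to reduce to showing $f_2(K,\beta e_2)=0$; finally it proves that the McMullen function $\varphi_{\beta e_2}$ is \emph{linear} on $E^*$, which requires the scaling relation above \emph{plus} explicit evaluation of $f_2$ on a family of simplices $[0,ae_1,bie_1,\gamma e_1+e_2]$ and comparison under specific shears $g_\gamma\in\SL(W,\C)$. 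None of these steps is captured by ``compare powers of $\lambda$''.
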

\begin{proof}
By the previous lemma, $K\mapsto f_{2}(K,u)$ is a continuous, translation invariant valuation, homogeneous of degree 2 which vanishes on every 2-dimensional convex body. Thus, Lemma \ref{lemma24a} implies that 
$$f_{2}(K,u)+f_{2}(-K,u)=0,\quad\forall K\in\K(W),u\in W.$$

Let $E\subset W$ be a $3$-dimensional subspace. Then, $E$ can be written as  $E=\mathrm{span}_{\R}\{e_{1},ie_{1},e_{2}\}$ for some vectors $e_{1},e_{2}\in W$ linearly independent over $\C$. For simplicity, we assume that $\{e_{1},ie_{1},e_{2}\}$ constitutes an orthonormal basis of $E$ and we identify $E^*$ with $E$. 

Let $K\in\K(E)$ be a fixed convex body in $E$.
Recall that, $f_{1}(K,\cdot)$ and $f_{3}(K,\cdot)$ are support functions given by $f_{1}(K,\cdot)=h(\tilde \D_{M}K,\cdot)$ and $f_{3}(K,\cdot)=h(\Pi_{N}K,\cdot)$ for some $M,N\in\K(\C)$, with $\tilde\D_{M}$ and $\Pi_{N}$ defined in \eqref{contra1} and \eqref{contra3}, resp. Thus, for every $u\in W$, 
$$f_{3}(K,u)=h(\Pi_{N}K,u)=\int_{S^3}h(N\cdot u,v)dS_{3}(K,v)=\vol_{3}(K)(h(N\cdot u,ie_{2})+h(N\cdot u,-ie_{2})),$$
since $K\in\K(E)$ and $ie_{2}$ is a normal vector to $K\subset W$ (see \cite{gardner_book06,schneider_book93} for more information on the surface area measure of a convex body).
In particular, we obtain 
$$h(\Pi_{N}K,\alpha e_{1}+\beta e_{2})=h(\Pi_{N}K,\beta e_{2}),\quad\forall\alpha,\beta\in\C.$$
Now, as in \cite{schneider_schuster06}, we use the subadditivity of $h(ZK,\cdot)$. We have
\begin{align*}0&\geq h(Z(\lambda K),\alpha e_{1}+\beta e_{2})-h(Z(\lambda K),\alpha e_{1})-h(Z(\lambda K),\beta e_{2})
\\&=h(\Pi_{N}(\lambda K),\alpha e_{1}+\beta e_{2})-h(\Pi_{N}(\lambda K),\alpha e_{1})-h(\Pi_{N}(\lambda K),\beta e_{2})
\\&\quad+f_{2}(\lambda K,\alpha e_{1}+\beta e_{2})-f_{2}(\lambda K,\alpha e_{1})-f_{2}(\lambda K,\beta e_{2})
\\&\quad+h(\tilde \D_{M}(\lambda K),\alpha e_{1}+\beta e_{2})-h(\tilde \D_{M}(\lambda K),\alpha e_{1})-h(\tilde \D_{M}(\lambda K),\beta e_{2})
\\&=\lambda^2\left(f_{2}(K,\alpha e_{1}+\beta e_{2})-f_{2}(K,\beta e_{2})\right)
\\&\quad+\lambda(h(\tilde \D_{M}K,\alpha e_{1}+\beta e_{2})-h(\tilde \D_{M}K,\alpha e_{1})-h(\tilde \D_{M}K,\beta e_{2})).
\end{align*}
Dividing by $\lambda^2$ and taking the limit $\lambda\to\infty$ we obtain 
$$f_{2}(K,\alpha e_{1}+\beta e_{2})\leq f_{2}(K,\beta e_{2}),$$
for every $\alpha,\beta\in\C,\mu\in\R$.

On the other hand, using that $f_{2}(K,-\xi)=-f_{2}(K,\xi)$, it follows that
\begin{equation}\label{linearity}f_{2}(K,\alpha e_{1}+\beta e_{2})=f_{2}(K,\beta e_{2}),\end{equation}
for every $\alpha,\beta\in\C$, $K\in\K(E)$.

Therefore, it remains to prove that $f_{2}(K,\beta e_{2})=0$ for every $\beta\in\C$, $K\in\K(E)$ to conclude that $f_{2}(K,\cdot)=0$ for every $K$ lying in the 3-dimensional subspace $E=\mathrm{span}_{\R}\{e_{1},ie_{1},e_{2}\}$.

\smallskip
Let $u\in W$. By the previous lemma, $K\mapsto f_{2}(K,u)$ restricted to convex bodies in $E$ is a simple, odd valuation (continuous and translation invariant). Using Theorem \ref{simple} we can write 
$$f_{2}(K,u)=V(K,K,\varphi_{u}),\quad u\in W,\,K\in\K(E),$$
where $\varphi_{u}:E^*\cong E\to\R$ is a continuous, 1-homogeneous and odd function, uniquely determined up to a linear function. We will show that $\varphi_{\beta e_{2}}$ is a linear function for every $\beta\in\C$.

We first prove that $\varphi_{\beta e_{2}}$ is linear for fixed $\beta$, when restricted to $\mathrm{span}_{\R}\{e_{1},ie_{1}\}$.
 
Let $\lambda\in\R_{>0}$ and $g\in\GL(W,\C)$ such that $g e_{1}=\lambda e_{1}$, $g e_{2}=e_{2}$. Denote by $\det g|_{E}$ the determinant of the restriction of $g$ to the 3-dimensional vector space $E$. Then, $\det g|_{E}=\lambda^2$ and $\det_{\C}g=\lambda$. Using Lemma \ref{det32}, we get
$$f_{2}(gK,u)=(\mathrm{det}_{\C} g)^{3/2}V(K,K,\varphi_{g^{-1}u})$$
and using the properties of the mixed volumes,
$$f_{2}(gK,u)=(\mathrm{det} g|_{E}) V(K,K,\varphi_{u}\circ g^{-*}),$$
which gives 
$$V(K,K,\varphi_{g^{-1}u})=\lambda^{1/2}V(K,K,\varphi_{u}\circ g^{-*}),$$
from which we can conclude that (see Theorem \ref{mcmullenN1})
\begin{equation}\label{detc}\varphi_{g^{-1}u}=\lambda^{1/2}\varphi_{u}\circ g^{-*}+l_{u,\lambda},
\end{equation}
where $l_{u,\lambda}$ is a linear function depending on $u$ and $\lambda$.

Let $\gamma=\gamma_{1}+i\gamma_{2}$. Taking $u=\beta e_{2}$ in \eqref{detc} and evaluating at $\gamma e_{1}$, we get
$$\varphi_{\beta e_{2}}(\gamma e_{1})
=\lambda^{1/2}\varphi_{\beta e_{2}}(\lambda^{-1}\gamma e_{1})+l_{\beta e_{2},\lambda}(\gamma e_{1})=\lambda^{-1/2}\varphi_{\beta e_{2}}(\gamma e_{1})+l_{\beta e_{2},\lambda}(\gamma e_{1}).$$
Since $l_{\beta e_{2},\gamma}$ is linear, we have that for every $\lambda>0$,
$$\varphi_{\beta e_{2}}(\gamma e_{1})-\varphi_{\beta e_{2}}(\gamma_{1} e_{1})-\varphi_{\beta e_{2}}(\gamma_{2}e_{1})=\lambda^{-1/2}\left(\varphi_{\beta e_{2}}(\gamma e_{1})-\varphi_{\beta e_{2}}(\gamma_{1} e_{1}) -\varphi_{\beta e_{2}}(\gamma_{2}e_{1})\right),$$
and using $\varphi_{\beta e_{2}}$ is a 1-homogeneous function,
\begin{equation}\label{lineare1}
\varphi_{\beta e_{2}}(\gamma e_{1})=\gamma_{1}\varphi_{\beta e_{2}}(e_{1})+\gamma_{2}\varphi_{\beta e_{2}}(ie_{1}).
\end{equation}

\smallskip
In the following we show that $\varphi_{\beta e_{2}}$ is linear on the whole of $E^*$. 
Let $\gamma\in\C$ and $g_{\gamma}\in\SL(W,\C)$ be defined by $g_{\gamma}e_{1}=e_{1}$ and $g_{\gamma}e_{2}=\gamma e_{1}+ e_{2}$. Note that $gE=E$ and $g_{\gamma}^{-1}e_{1}=e_{1}$ and $g_{\gamma}^{-1}e_{2}=-\gamma e_{1}+e_{2}$. Using the $\SL(W,\C)$-contravariance of $f_{2}$ we get
$$f_{2}(g_{\gamma}K,\beta e_{2})=f_{2}(K,g_{\gamma}^{-1}\beta e_{2})=f_{2}(K,-\gamma\beta e_{1}+\beta e_{2}).$$
Hence, \eqref{linearity} implies 
\begin{equation}\label{gbeta}f_{2}(g_{\gamma}K,\beta e_{2})=f_{2}(g_{\gamma'}K,\beta e_{2}),\quad\forall\gamma,\gamma',\beta\in\C,\,K\in\K(E).\end{equation}
Next, we apply the previous identity to the simplex $K=[0,ae_{1},bie_{1},e_{2}]$ with $a,b\in\R\setminus\{0\}$. Note that 
$$g_{\gamma}K=[0,ae_{1},bie_{1},\gamma e_{1}+e_{2}].$$ 
Thus, the surface area measure of $g_{\gamma}K$ is given by 
\begin{align*}S(g_{\gamma}K,\cdot)&=\frac{|ab|}{2}\delta_{-e_{2}}+\frac{|a|\sqrt{1+\gamma_{2}^2}}{2}\delta_{\frac{\sgn(b)}{\sqrt{1+\gamma_{2}^2}}(-ie_{1}+\gamma_{2}e_{2})}+\frac{|b|\sqrt{1+\gamma_{1}^2}}{2}\delta_{\frac{\sgn(a)}{\sqrt{1+\gamma_{1}^2}}(-e_{1}+\gamma_{1}e_{2})}
\\&\quad\quad+\frac{\sqrt{a^2+b^2+(b\gamma_{1}+a(\gamma_{2}-b))^2}}{2}\delta_{\frac{\sgn(a)\sgn(b)}{\sqrt{a^2+b^2+(b\gamma_{1}+a(\gamma_{2}-b))^2}}(be_{1}+aie_{2}-(b\gamma_{1}+a(\gamma_{2}-b))e_{2})}.\end{align*}
This can be easily shown by computing the normal vector and the area of each facet of the simplex.

Next we compute $f_{2}(g_{\gamma}K,\beta e_{2})$ using the function $\varphi_{\beta e_{2}}$ studied above. Letting $\varphi:=\varphi_{\beta e_{2}}$, we get
\begin{align}\nonumber 2f_{2}(g_{\gamma}K,\beta e_{2})&=\sgn(a)\sgn(b)\left(ab\varphi(-e_{2})+a\varphi(-ie_{1}+\gamma_{2}e_{2})\right.
\\&\label{general}\quad\quad\quad\quad\left.+b\varphi(-e_{1}+\gamma_{1}e_{2})+\varphi(be_{1}+aie_{1}-(b\gamma_{1}+a\gamma_{2}-ab)e_{2})\right).\end{align}

We can now show that $\varphi$ restricted to $\mathrm{span}_{\R}\{e_{1},e_{2}\}$ and $\mathrm{span}_{\R}\{ie_{1},e_{2}\}$ is a linear function. 
Choose $\gamma=ib,\gamma'=a$, for which $b\gamma_{1}+a\gamma_{2}-ab=0$. Then, \eqref{gbeta} becomes
$$a\varphi(-ie_{1}+be_{2})+b\varphi(-e_{1})+\varphi(be_{1}+aie_{1})=a\varphi(-ie_{1})+b\varphi(-e_{1}+ae_{2})+\varphi(be_{1}+aie_{1}),$$
which can be written as
\begin{equation}\label{a}
b\varphi(e_{1})=a(\varphi(ie_{1})+\varphi(ba^{-1}e_{1}-be_{2})+\varphi(-ie_{1}+be_{2})),
\end{equation}
or
\begin{equation}\label{b}
a\varphi(ie_{1})=b(\varphi(e_{1})+\varphi(-e_{1}+ae_{2})-\varphi(-ab^{-1}ie_{1}+ae_{2})).
\end{equation}
Taking the limit $a\to\infty$ in \eqref{a} and $b\to\infty$ in \eqref{b}, we get 
$$\varphi(-ie_{1}+be_{2})=-\varphi(ie_{1})+\varphi(be_{2}),\quad\forall b\in\R\setminus\{0\},$$
$$\varphi(-e_{1}+ae_{2})=-\varphi(e_{1})+\varphi(ae_{2}),\quad\forall a\in\R\setminus\{0\}.$$
Using that $\varphi$ is a 1-homogeneous odd function we get the linearity of $\varphi$ restricted to $\mathrm{span}_{\R}\{e_{1},e_{2}\}$ and to $\mathrm{span}_{\R}\{ie_{1},e_{2}\}$, i.e. for every $x,y\in\R$
$$\varphi(xie_{1}+ye_{2})=x\varphi(ie_{1})+y\varphi(e_{2}),$$
$$\varphi(xe_{1}+ye_{2})=x\varphi(e_{1})+y\varphi(e_{2}).$$

It only remains to prove the linearity on the whole of $E$. 
From the above identities, \eqref{general} can be rewritten as
\begin{align*}\nonumber 2f_{2}(g_{\gamma}K,\beta e_{2})&=\sgn(a)\sgn(b)\left(ab\varphi(-e_{2})-a\varphi(ie_{1})+\gamma_{2}a\varphi(e_{2})\right.
\\&\quad\quad\quad\quad\left.-b\varphi(e_{1})+\gamma_{1}b\varphi(e_{2})+\varphi(be_{1}+aie_{1}-(b\gamma_{1}+a\gamma_{2}-ab)e_{2})\right).\end{align*}

We now choose $\gamma=b^{-1}+ib$ and $\gamma'=ib$, so that \eqref{gbeta} implies
$$\varphi(be_{1}+aie_{1}-e_{2})-a\varphi(ie_{1})-b\varphi(e_{1})+(1+ab)\varphi(e_{2})=\varphi(be_{1}+aie_{1})-a\varphi(ie_{1})-b\varphi(e_{1})+ba\varphi(e_{2}).$$
Using \eqref{lineare1} we have
$$\varphi(be_{1}+aie_{1}-e_{2})+\varphi(e_{2})=\varphi(be_{1}+aie_{1})=b\varphi(e_{1})+a\varphi(ie_{1}),$$
that is, for every $a,b\in\R\setminus\{0\}$
$$\varphi(be_{1}+aie_{1}-e_{2})=b\varphi(e_{1})+a\varphi(ie_{1})-\varphi(e_{2}).$$
Since $\varphi$ is $1$-homogeneous and odd, we obtain that $\varphi$ is linear, i.e., for every $x,y,z\in\R$ we have
$$\varphi(xe_{1}+yie_{1}+ze_{2})=x\varphi(e_{1})+y\varphi(ie_{1})+z\varphi(e_{2}).$$

Therefore, $\varphi=\varphi_{\beta e_{2}}:E^*\to\R$ is a linear function for every $\beta\in\C$, which implies that $f_{2}(K,\beta e_{2})=V(K,K,\varphi_{\beta e_{2}})=0$ and, using \eqref{linearity}, we conclude that $f_{2}(K,\cdot)\equiv 0$ for every $K\in\K(E)$. 
\end{proof}

Theorem \ref{contravariant} follows from the previous lemma since for every $u\in W$, we have that $f_{2}(\cdot,u)$ is a continuous, translation invariant valuation which is homogeneous of degree 2 and vanishes whenever $\dim K=3$. Thus, using Lemma \ref{lemma24b} we have that $f_{2}(\cdot,u)=0$, for every $u\in W$, which implies the result. 

\bibliographystyle{plain}
\def\cprime{$'$}

\end{document}